\numberwithin{equation}{section} \setlength{\textwidth}{6.5in}
\def\O{{\Omega}}
\def\x{{\times}}
\def\G{{\Gamma}}
\def\E{{\exists}}
\def\H{{\cal H}}
\def\E{{\cal E}}
\newtheorem{Theorem}{Theorem}[section]
\newtheorem{Lemma}[Theorem]{Lemma}
\newtheorem{Proposition}[Theorem]{Proposition}
\newtheorem{Remark}[Theorem]{Remark}
\newtheorem{Definition}[Theorem]{Definition}
\newtheorem{Examples}[Theorem]{Examples}
\newcommand{{\rr}}{{\mathbb R}}
\newenvironment{@abssec}[1]{%
     \if@twocolumn
       \section*{#1}%
     \else
       \vspace{.05in}\footnotesize
       \parindent .2in
         {\upshape\bfseries #1. }\ignorespaces
     \fi}
     {\if@twocolumn\else\par\vspace{.1in}\fi}
\newcommand\keywordsname{Key words}
\newcommand\AMSname{AMS subject classifications}
\begin{document}

\author{Giuliano Gargiulo\footnote{Dipartimento di Scienze Biologiche ed Ambientali, Universit\'{a} degli Studi del Sannio, via Port'Arsa, 82100, Benevento, Italy.\newline Email:
\texttt{giuliano.gargiulo@unisannio.it}}\and Elvira Zappale\footnote{DIIMA,
Universit\`a degli Studi di Salerno, via Ponte Don Melillo, 84084
Fisciano (SA), Italy. \newline Email:
\texttt{ezappale@unisa.it}}}

\title{Some sufficient conditions for lower semicontinuity in SBD and applications to minimum problems of Fracture Mechanics.}
\date{}

\maketitle
\begin{abstract}
We provide some lower semicontinuity results in the space of special functions of bounded deformation for energies of the type  
$$
\int_{J_{u}} \Theta(u^+, u^-, \nu_{u})d \H^{N-1} \enspace, \enspace [u]\cdot \nu_u  \geq 0 \enspace {\cal H}^{N-1}-\hbox{ a. e. on }J_u,
$$
and give some examples and applications to minimum problems.

\noindent Keywords: Lower semicontinuity, fracture, special functions of bounded deformation, joint convexity, $BV$-ellipticity.
\par
\noindent {\bf 2000 Mathematical Subject Classification:} 49J45, 74A45, 74R10.
\end{abstract}
\medskip

\section{Introduction}

This study is motivated by the results contained in \cite{ ABF2, ABF3, AB} where it has been studied, both from the mechanical and computational viewpoint with several techniques, in the regime of linearized elasticity, the propagation of the fracture in a cracked body with a dissipative energy a la Barenblatt, i.e. of the type $\int_\Gamma \phi([u]\cdot \nu_u, [u] \cdot \tau_u)d {\cal H}^{N-1}$, where $\Gamma$ denotes the unknown crack site, $[u]\cdot \nu_u $, $[u] \cdot \tau_u$  represent the detachment and the sliding components respectively, of the opening of the fracture $[u]$ and, the energy density $\phi$ has the form
\begin{equation}\label{functionAB}
\phi([u]\cdot \nu_u,[u]\cdot \tau_u)=\left\{\begin{array}{ll}
0  &\hbox{if }[u] \cdot \nu_u=[u]\cdot \tau_u=0,\\
K &\hbox{if } [u] \cdot \nu_u \geq 0,\\
+\infty &\hbox{if } [u]\cdot \nu_u < 0 
\end{array}\right.
\end{equation}
where $K$ is a suitable positive constant.
\noindent
It has to be emphasized that the energy density $\phi$ in (\ref{functionAB}) also takes into account an infinitesimal noninterpenetration constraint, i.e. all the deformations $u$ pertaining to the effective description of the energy must satisfy $[u]\cdot \nu_u \geq 0 \; {\cal H}^{N-1}$ a.e. on $\Gamma$.

More precisely, the subsequent analysis aims to extend some of the results contained in \cite{GZ1, GZ2, GZ3}. In fact the target of those results was providing a mathematical justification to the minimization procedure adopted in \cite{ABF2, ABF3, AB} which appears at each time step, when studying the propagation of the fracture using the quasistatic evolution method, as introduced in \cite{FM} and developed in many other papers  (see for instance \cite{DMT1, DMT2} for the first formulation in terms of free discontinuity problem in the nonlinear elasticity setting, \cite{Ch} for the linear case, see also the more recent papers \cite{DMFT, FL, BFM} among a wide literature).
Indeed in order to derive, from the mathematical viewpoint, the properties of the energy $\phi$ above which guarantee lower semicontinuity with respect to the natural convergences (\ref{1.4BCDM}) $\div$ (\ref{1.6BCDM}) below, in order to generalize the energetic model contained  in \cite{ABF2, ABF3,AB} and finally to extend the lower semicontinuity results for surface integrals contained in \cite{BCDM}, the following result has been proved in \cite{GZ1}:
\begin{Theorem}\label{mainthmv}
Let $\O$ be a bounded open subset of $\mathbb R^N$, 
let 
\begin{equation}\label{famiglia}
\Phi:=\left\{\varphi :[0, + \infty[ \to [0, + \infty[, \varphi \hbox{ convex, subadditive and nondecreasing}\right\}
\end{equation} and let $\varphi \in \Phi$. 
Let $\{u_h\}$ be a sequence in $SBD(\O)$, such that $[u_h] \cdot \nu_{u_h} \geq 0$  $\H^{N-1}$-a.e. on $J_{u_h}$ for every $h$, converging to $u$ in $L^1(\O;\mathbb R^N)$ satisfying (\ref{1.3modrem2.3BCDM})  below, with a function  $\gamma :[0, +\infty[ \to [0, \infty[$ nondecreasing and verifying the superlinearity condition (\ref{1.2BCDM}) below.
Then 
\begin{equation}\label{interpenetration}
[u] \cdot \nu_u \geq 0 \enspace{ \cal H}^{N-1}- \hbox{ a.e. on }J_u,
\end{equation}
and
\begin{equation}\label{lscv}
\displaystyle{\int_{J_{u}} \varphi([u] \cdot \nu_{u})d \H^{N-1}\leq \liminf_{h \to + \infty} \int_{J_{u_h}} \varphi([u_h] \cdot \nu_{u_h})d \H^{N-1}.}
\end{equation}
\end{Theorem}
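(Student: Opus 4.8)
The plan is to combine a compactness step with a divergence argument for the noninterpenetration conclusion and a localization/differentiation-of-measures argument for the lower semicontinuity inequality; the structural fact driving the last step is that each $\varphi\in\Phi$ is a supremum of affine functions having \emph{both} nonnegative slope and nonnegative intercept. I would start from the compactness and closure theorems for $SBD$ of \cite{BCDM}, which the present statement is meant to extend: the superlinearity condition (\ref{1.2BCDM}) together with the bound (\ref{1.3modrem2.3BCDM}) forces $u\in SBD(\O)$, gives $\mathcal{E}u_h\weak\mathcal{E}u$ weakly in $L^1(\O;\RI^{N\times N}_{\mathrm{sym}})$, and yields the local jump--set lower semicontinuity $\H^{N-1}(J_u\cap A)\le\liminf_h\H^{N-1}(J_{u_h}\cap A)$ for every open $A\subseteq\O$. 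Since functions in $SBD$ carry no Cantor strain, $\operatorname{div}u_h=\operatorname{tr}(\mathcal{E}u_h)\,\mathcal{L}^N+([u_h]\cdot\nu_{u_h})\,\H^{N-1}\llcorner J_{u_h}$, and likewise for $u$; combining $\operatorname{div}u_h\to\operatorname{div}u$ in $\mathcal{D}'(\O)$ (from $u_h\to u$ in $L^1$) with the weak $L^1$-convergence of the absolutely continuous parts, the measures $\mu_h:=([u_h]\cdot\nu_{u_h})\,\H^{N-1}\llcorner J_{u_h}$ converge in $\mathcal{D}'(\O)$ to $\mu:=([u]\cdot\nu_u)\,\H^{N-1}\llcorner J_u$. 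Each $\mu_h$ is nonnegative by hypothesis, and its mass is uniformly bounded: splitting $J_{u_h}$ according to whether $|[u_h]\cdot\nu_{u_h}|$ is below or above a level $M$, the low part is bounded by $M\,\H^{N-1}(J_{u_h})$ and the high part by superlinearity of $\gamma$, using (\ref{1.3modrem2.3BCDM}). Hence, along a subsequence, $\mu_h\weakstar\mu$ as Radon measures, so $\mu\ge0$, which is precisely (\ref{interpenetration}).

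For (\ref{lscv}) I would first record two elementary properties of $\Phi$. Subadditivity forces $\alpha_\infty:=\lim_{s\to\infty}\varphi(s)/s<\infty$, so by convexity $\varphi(s)\le\varphi(0)+\alpha_\infty s$; consequently $\sigma_h:=\varphi([u_h]\cdot\nu_{u_h})\,\H^{N-1}\llcorner J_{u_h}$ has uniformly bounded mass and $\sigma_h\weakstar\sigma$ along a further subsequence. Moreover, for every $s_0\ge0$ and every subgradient $p\in\partial\varphi(s_0)$ one has $p\ge0$ (monotonicity) and $\varphi(s_0)-p\,s_0\ge0$; the latter is the one place where the pair ``subadditive $+$ nonnegative'' is genuinely used, via $p\,s_0\le\varphi(2s_0)-\varphi(s_0)\le\varphi(s_0)$. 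Now fix $x_0\in J_u$ that is a density-one point of $J_u$, an $\H^{N-1}\llcorner J_u$-approximate continuity point of $[u]\cdot\nu_u$ (so $\mu(B_\rho(x_0))/\H^{N-1}(J_u\cap B_\rho(x_0))\to s_0:=[u](x_0)\cdot\nu_u(x_0)$), and a Besicovitch differentiation point of $\sigma$ with respect to $\H^{N-1}\llcorner J_u$; $\H^{N-1}$-a.e.\ point of $J_u$ qualifies.

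Applying the subgradient inequality $\varphi(t)\ge\varphi(s_0)+p(t-s_0)$ pointwise on $J_{u_h}$ with $t=[u_h]\cdot\nu_{u_h}\ge0$ and integrating over $B_\rho(x_0)$ gives
$$\sigma_h\big(\overline{B_\rho(x_0)}\big)\ \ge\ \big(\varphi(s_0)-p\,s_0\big)\,\H^{N-1}\big(J_{u_h}\cap B_\rho(x_0)\big)\ +\ p\,\mu_h\big(B_\rho(x_0)\big).$$
Passing to the limit in $h$ — using $\sigma(\overline{B_\rho})\ge\limsup_h\sigma_h(\overline{B_\rho})$, the lower semicontinuity of $v\mapsto\H^{N-1}(J_v\cap B_\rho)$, and $\liminf_h\mu_h(B_\rho)\ge\mu(B_\rho)$, each inserted with the right sign because $\varphi(s_0)-p\,s_0\ge0$ and $p\ge0$ — then dividing by $\H^{N-1}(J_u\cap B_\rho(x_0))$ and sending $\rho\to0$, I obtain $\frac{d\sigma}{d(\H^{N-1}\llcorner J_u)}(x_0)\ge(\varphi(s_0)-p\,s_0)+p\,s_0=\varphi(s_0)$. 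Therefore $\sigma\ge\varphi([u]\cdot\nu_u)\,\H^{N-1}\llcorner J_u$, and since $\sigma(\O)\le\liminf_h\sigma_h(\O)$ while the extracted subsequence may be chosen to realize $\liminf_h\int_{J_{u_h}}\varphi([u_h]\cdot\nu_{u_h})\,d\H^{N-1}$, inequality (\ref{lscv}) follows for the full sequence.

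The step I expect to be the crux is this last localization. A single affine minorant $\alpha_i t+\beta_i$ of $\varphi$ only yields $\liminf_h\int_{J_{u_h}}\varphi([u_h]\cdot\nu_{u_h})\,d\H^{N-1}\ge\sup_i\big(\alpha_i\!\int_{J_u}[u]\cdot\nu_u\,d\H^{N-1}+\beta_i\,\H^{N-1}(J_u)\big)$, which is strictly weaker than $\int_{J_u}\sup_i(\alpha_i\,[u]\cdot\nu_u+\beta_i)\,d\H^{N-1}$; it is the blow-up/differentiation argument above that converts ``$\sup_i\int$'' into ``$\int\sup_i$'', and it works precisely because in $\Phi$ every supporting affine function has nonnegative slope \emph{and} intercept, so the two lower-semicontinuity building blocks — the jump area $\H^{N-1}\llcorner J_{u_h}$ and the nonnegative ``normal-opening'' measure $\mu_h$ — enter with the correct signs. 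The surrounding bookkeeping (selection of radii with $\sigma$-negligible spheres, the density properties of $\H^{N-1}\llcorner J_u$ and of $\mu$ at $\H^{N-1}$-a.e.\ point, and the standard subsequence reduction) is routine.
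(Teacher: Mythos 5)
Your proposal is correct in substance, but note first that the paper contains no proof of Theorem \ref{mainthmv} to compare against: the statement is imported from \cite{GZ1}, and Remark \ref{remarkclassinGZ2} only records that the proof there ``relies essentially on Geometric Measure Theory'', in contrast with the slicing proof of Theorem \ref{mainthm}. Your route is squarely in that GMT spirit. The identity $\operatorname{div}u=\operatorname{tr}({\cal E}u)\,{\cal L}^N+([u]\cdot\nu_u)\,{\cal H}^{N-1}\lfloor J_u$ for $SBD$ fields, combined with distributional convergence of $\operatorname{div}u_h$ and the weak $L^1$ convergence (\ref{1.5BCDM}) of ${\cal E}u_h$, does give weak* convergence of the nonnegative measures $\mu_h$ and hence (\ref{interpenetration}); and your blow-up at ${\cal H}^{N-1}$-a.e.\ point of $J_u$ closes precisely because membership in $\Phi$ is equivalent to saying that every supporting affine function $t\mapsto pt+(\varphi(s_0)-ps_0)$ has $p\ge 0$ \emph{and} $\varphi(s_0)-ps_0\ge 0$, so the two lower-semicontinuity building blocks (the localized version of (\ref{1.7BCDM}) and the weak* lower semicontinuity of $\mu_h$ on open sets) enter with favourable signs. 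In \cite{GZ1} this step is instead delegated to the Bouchitt\'e--Buttazzo lower semicontinuity theory for functionals defined on measures (whence the citation \cite{BB}), applied to the pair $(\mu_h,{\cal H}^{N-1}\lfloor J_{u_h})$; your explicit Besicovitch differentiation argument is a self-contained substitute for that machinery, at the modest price of the usual bookkeeping about density points and negligible spheres, which you correctly flag as routine.

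One repair is needed. Your justification of the uniform mass bound for $\mu_h$ --- splitting $J_{u_h}$ at a level $M$ and invoking ``superlinearity of $\gamma$'' for the high part --- does not work: $\gamma$ in (\ref{1.3modrem2.3BCDM}) controls the absolutely continuous strain ${\cal E}u_h$ and gives no information whatsoever on the jump opening $[u_h]$. The bound you need is instead immediate from the $L^\infty$ part of (\ref{1.3modrem2.3BCDM}): $0\le [u_h]\cdot\nu_{u_h}\le |u_h^+-u_h^-|\le 2\|u_h\|_{L^\infty}\le 2K$, hence $\mu_h(\O)\le 2K\,{\cal H}^{N-1}(J_{u_h})\le 2K^2$, and likewise $\sigma_h(\O)\le \varphi(2K)\,{\cal H}^{N-1}(J_{u_h})$ by monotonicity of $\varphi$, which even spares you the recession-slope estimate $\varphi(s)\le\varphi(0)+\alpha_\infty s$. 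With that substitution the argument is complete.
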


It can be easily seen that the class $\Phi$ in (\ref{famiglia}) includes functions of the type $\phi$ above, but  it has also to be remarked that, in general, the functions in $\Phi$ can be truly convex. Indeed, typical examples of functions in $\Phi$ are given by
$\varphi: s \in \mathbb R^+ \mapsto  (1 + s^p)^{\frac{1}{p}}$, $p \geq 1$, but in practice this class of functions does not perfectly fit the mechanical framework, where actually a `concave-type' behavior is expected. 

In fact, the present paper originates from the desire of finding a wider class of functions, containing the function $\phi$ in \cite{ ABF2, ABF3, AB}, including energy densities with a more general dependence on the opening of the fracture $[u]$ and on the normal of the crack site $\nu_u$ rather than just on their scalar product $[u]\cdot \nu_u$ as for $\varphi$ in (\ref{famiglia}) or possibly exhibiting a dependence from the `traces' on the two sides of the crack site, and which still ensures lower semicontinuity.
A first  result in this direction, i.e. the lower semicontinuity of $\int_{J_u}\Psi([u], \nu_u)d{\cal H}^{N-1}$ with respect to convergences (\ref{1.4BCDM}) $\div$ (\ref{1.6BCDM}), together with a characterization of such integrands (see \cite[Theorem 4.5]{GZ2}), has been achieved in \cite[Theorem 1.2]{GZ2}, where the class 
\begin{equation}\label{formulaPsi}
\Psi:(a,p) \in \mathbb R^N \times S^{N-1} \mapsto \sup_{\xi \in S^{N-1}}| p  \cdot \xi|\psi(|a\cdot \xi|),
\end{equation} 
has been introduced, with $\psi:[0, + \infty[ \to [0, + \infty[$ lower semicontinuous, nondecreasing,  subadditive (more generally a lower semicontinuos function such that $\psi(|\cdot|)$ is subadditive). For a more detailed discussion see Remark \ref{remarkclassinGZ2} below.

With the aim of considering surface energies whose densities have explicit dependence on the two different {\it one-sided Lebesgue limits} (see Section 2 below) and on the normal to the jump site, we introduce here the class $\Theta$ and prove Theorem \ref{mainthm} stated below. 
\begin{Definition}\label{Theta}
Let $\Theta$, with a notational abuse, be the class of functions of the form
\begin{equation}\label{Thetadef}
\displaystyle{\Theta:(i,j,p) \in \mathbb R^N \times \mathbb R^N \times S^{N-1} \to \sup_{\xi \in S^{N-1}}f(i\cdot \xi, j\cdot \xi, p\cdot \xi)}
\end{equation}
where $f:\mathbb R \to ]0,+\infty[$ is a continuous $BV$-elliptic function.
\end{Definition}

(See Definition \ref{BV-ellipticity} for $BV$-ellipticity.)

\begin{Theorem}\label{mainthm}
Let $\Omega$ be a bounded open subset of $\mathbb R^N$, let $\gamma:[0,+\infty[ \to [0,+\infty[$ be a non-decreasing function verifying condition  (\ref{1.2BCDM}) and let $\Theta$ be as in (\ref{Thetadef}) where $f$ is a continuous $BV$-elliptic function in the sense of Definition \ref{BV-ellipticity}. Let $\{u_h\}$ be a sequence in $SBD(\Omega)$ satisfying the bound  (\ref{1.3modrem2.3BCDM}), such that $[u_h]\cdot \nu_{u_h}\geq 0$, ${\cal H}^{N-1}$-a.e. on $J_{u_h}$ for every $h$ and converging to $u$ in $L^1(\Omega;\mathbb R^N)$. Then (\ref{interpenetration}) holds and
\begin{equation}\label{maineq}
\displaystyle{\int_{J_u}\Theta(u^+,u^-,\nu_u)d{\cal H}^{N-1}\leq \liminf_{h \to +\infty}\int_{J_{u_h}}\Theta(u_h^+,u^-_h, \nu_{u_h}) d{\cal H}^{N-1}.}
\end{equation}
\end{Theorem}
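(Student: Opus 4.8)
\medskip
\noindent\textbf{Sketch of the proof.}
The plan is to combine the supremum/localization method with one-dimensional slicing of $SBD$ functions, extending the scheme used in \cite{BCDM} and in Theorem \ref{mainthmv} and \cite[Theorem 1.2]{GZ2}; the novelty to be handled is that here the surface density $f$ depends on the two one-sided traces and is only assumed continuous and $BV$-elliptic, rather than of the special subadditive form $\psi(|a\cdot\xi|)$. As a first step, since $\Theta(i,j,p)=\sup_{\xi\in S^{N-1}}f(i\cdot\xi,j\cdot\xi,p\cdot\xi)$ with $f$ continuous, a De Giorgi--Letta type argument shows that the set function $A\mapsto\int_{J_u\cap A}\Theta(u^+,u^-,\nu_u)\,d{\cal H}^{N-1}$ is the supremum, over finite families of pairwise disjoint open sets $A_i\subseteq\Omega$ and directions $\xi_i\in S^{N-1}$, of the sums $\sum_i\int_{J_u\cap A_i}f(u^+\cdot\xi_i,u^-\cdot\xi_i,\nu_u\cdot\xi_i)\,d{\cal H}^{N-1}$. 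Because the $A_i$ are disjoint and $\Theta\geq0$, it is then enough to prove, for each fixed $\xi\in S^{N-1}$ and each open $A$, the lower semicontinuity estimate for the single functional $u\mapsto\int_{J_u\cap A}f(u^+\cdot\xi,u^-\cdot\xi,\nu_u\cdot\xi)\,d{\cal H}^{N-1}$ along our sequences, and then to use $f(\cdot,\cdot,\cdot)\leq\Theta$ on the right-hand side, sum over $i$, and pass to the supremum.

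Next I would reduce the fixed-direction estimate to a one-dimensional statement by slicing. By the Ambrosio--Coscia--Dal Maso slicing of $SBD$ in the direction $\xi$, for ${\cal H}^{N-1}$-a.e.\ $y$ in $\xi^{\perp}$ the slice $t\mapsto u(y+t\xi)\cdot\xi$ belongs to $SBV$ of the corresponding section, its jump set is the trace on that line of $J_u\cap\{\nu_u\cdot\xi\neq0\}$, and its one-sided limits reproduce the one-sided traces $u^{\pm}\cdot\xi$ (up to the sign of $\nu_u\cdot\xi$); the same holds for each $u_h$. The coarea formula for rectifiable sets rewrites the two integrals above, restricted to $\{\nu_u\cdot\xi\neq0\}$, as integrals over $\xi^{\perp}$ of one-dimensional jump energies of the slices, the set $\{\nu_u\cdot\xi=0\}$ giving no contribution and being recovered at the end by letting the localizing directions $\xi_i$ fill $S^{N-1}$. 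By Fatou it suffices to treat almost every section; there the slices of $u_h$ converge in $L^1$ to the slice of $u$ along a subsequence, the bound (\ref{1.3modrem2.3BCDM}) together with the superlinearity (\ref{1.2BCDM}) of $\gamma$ makes the approximate symmetric gradients equi-integrable, so that Ambrosio's one-dimensional $SBV$ closure and lower semicontinuity theorem applies; the conclusion then follows once one checks that the $BV$-ellipticity of $f$ (Definition \ref{BV-ellipticity}) descends to the subadditivity-type inequalities that characterize lower semicontinuity of one-dimensional jump energies with density $f(\cdot,\cdot,\pm1)$.

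It remains to pass the non-interpenetration condition to the limit. The jump measures $([u_h]\cdot\nu_{u_h})\,{\cal H}^{N-1}$ restricted to $J_{u_h}$ -- that is, the traces of the jump parts of the symmetrized gradients $Eu_h$ -- are nonnegative; since $u_h\to u$ in $L^1$ and (\ref{1.3modrem2.3BCDM}) bounds $|Eu_h|(\Omega)$, one has $Eu_h\weakstar Eu$, and by equi-integrability of the diffuse parts (again from the superlinear bound) and the $SBD$ compactness results of \cite{BCDM} the jump parts converge weakly-$\star$ as well; taking traces and passing to the limit in the sign gives that $([u]\cdot\nu_u)\,{\cal H}^{N-1}$ restricted to $J_u$ is a nonnegative measure, i.e.\ (\ref{interpenetration}). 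Collecting the sectional inequalities via Fatou, then summing over the disjoint family and taking the supremum over directions as prepared in the first step, finally yields (\ref{maineq}).

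The main obstacle is the second step: transferring the $N$-dimensional, vector-valued $BV$-ellipticity of $f$ into a usable one-dimensional lower semicontinuity statement on the $SBD$-slices while keeping track of \emph{both} one-sided traces $u^{\pm}\cdot\xi$ and of the normal weight $\nu_u\cdot\xi$, and making sure that the superlinear growth (\ref{1.2BCDM}) genuinely prevents jump mass from being absorbed into the diffuse symmetric-gradient part in the limit (the usual $SBD$/$SBV$ closure difficulty). This trace dependence is exactly what is absent from \cite{GZ2}, where the density involves only the jump amplitude.
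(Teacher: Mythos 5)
Your proposal follows essentially the same route as the paper: the supremum over directions and disjoint open sets is handled by \cite[Lemma 2.35]{AFP} (your De Giorgi--Letta step), the fixed-direction estimate is obtained by the Ambrosio--Coscia--Dal Maso slicing of $SBD$ combined with Ambrosio's one-dimensional lower semicontinuity theorem for continuous $BV$-elliptic integrands and Fatou's lemma (the paper's Lemma \ref{lscalongdir}), and the ``obstacle'' you flag is resolved exactly as you suggest, by applying Theorem \ref{lscbvelliptA2} in dimension one to the scalar slices $u^\xi_y$. The only cosmetic difference is that the paper obtains (\ref{interpenetration}) by citing Theorem \ref{mainthmv} rather than re-deriving it from the weak-$\star$ convergence (\ref{1.6BCDM}) of the jump parts as you do.
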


The structure of the paper is the following. In Section \ref{notazioni} the main results from Geometric Measure Theory concerning spaces of functions with bounded deformation and special functions of bounded variation, are recalled. Section 3 is devoted to Theorem \ref{mainthm} and to related minimun problems.

\section{Notations and preliminary results }\label{notazioni}


In this paper $\Omega$ will be a bounded open subset of $\mathbb R^N$.  We shall usually suppose, when not explicitly mentioned, (essentially to avoid trivial cases) that $N > 1$. Let $u \in L^1(\Omega;\mathbb R^m)$, the set of Lebesgue points of $u $ is denoted by $\Omega_u$. Equivalently $x \in \Omega_u$ if and only if there exists a (necessarily unique) $\tilde{u}(x)\in \mathbb R^m$ such that
$$
\lim_{\varrho \to 0^+}\frac{1}{\varrho^N}\int_{B_{\varrho}(x)}|u(y)-\tilde{u}(x)|d y=0.
$$

A function $u \in L^1(\Omega;\mathbb R^m)$ is said to be of bounded variation, and we write $u \in BV(\O;\mathbb R^m)$ if its distributional gradient $Du$ is an $m \times N$ matrix of finite Radon measures in $\O$, $Du \in {\cal M}_{b}(\O; M^{m\times N})$. Furthermore the following ${\it Lebesgue  (Radon-Nykodim)}$ decomposition holds
$Du= \nabla u {\cal L^N}+ D^s u$, where $D^s u$, the singular part with respect the Lebesgue measure ${\cal L}^N$, can be split as $ D^j u+ D^c u$, with $D^j u$ the restriction of $Du$ to $\Omega \setminus \O_u$, and $D^c u$ the restriction of $ D^s u$ to $\O_u$. For the above, and further details on BV functions, see e.g. \cite{AFP}.

$BD(\O)$ is the space of {\it vector fields with bounded deformation} and it is defined as the set of vector fields $u=(u^1,\dots, u^N) \in L^1(\Omega;\mathbb R^N)$ whose distributional gradient $Du=\{D_iu^j\}$ has the symmetric part
$$
Eu=\{E_{ij}u\}, E_{ij}u=(D_iu^j+D_ju^i)/2
$$
which belongs to ${\cal M}_b(\O;M^{N \x N}_{sym})$, the space of bounded Radon measures in $\O$ with values in $M^{N \x N}_{sym}$ (the space of symmetric $N \x N$ matrices). For $u \in BD(\O)$, the {\it jump set } $J_u$ is defined as the set of points $x \in \O$ where $u$ has two different {\it one-sided Lebesgue limits} $u^+(x)$ and $u^-(x)$, with respect to a suitable direction $\nu_u(x) \in S^{N-1}=\{\xi\in {\mathbb R}^N: |\xi|=1 \}$, i.e.
\begin{equation}\label{u+-}
\displaystyle{\lim_{\varrho \to 0^+}\frac{1}{\varrho^N}\int_{B^{\pm}_\varrho(x,\nu_u(x))}|u(y)-u^{\pm}(x)|dy=0},
\end{equation}
where $B^\pm_\varrho(x,\nu_u(x))=\{y \in \mathbb R^N:|y-x|< \varrho, (y-x)\cdot (\pm \nu_u(x))>0\}$, ($(u^+, u^-, \nu_u)$ are determined within permutation to $(u^-,  u^+, -\nu_u)$), accordingly we shall assume that all the subsequent integrands $f(i,j,p)$ will be compatible with this permutation, i.e. $f(i,j,p)= f(j,i,-p)$. Ambrosio, Coscia and Dal Maso \cite{ACDM} proved that for every $u \in BD(\O)$ the jump set $J_u$ is Borel measurable and countably $({\cal H}^{N-1}, N-1)$ rectifiable and $\nu_u(x)$ is normal to the approximate tangent space to $J_u$ at $x$ for $\H^{N-1}$-a.e. $x \in J_u$, where $\H^{N-1}$ is the $(N-1)$-dimensional Hausdorff measure (see \cite{AFP} and \cite{F}).

\noindent For every $u \in BD(\O)$, the Lebesgue decomposition of $Eu$ is  
$$
Eu=E^au+E^su
$$
with $E^au$ the absolutely continuous part  and $E^su$ the singular part with respect to the Lebesgue measure ${\cal L}^N$. $\E u$ denotes the density of $E^au$ with respect to ${\cal L}^N$, i.e. $E^au=\E u {\cal L}^N$. We recall that $E^s u$ can be  further decomposed as 
$$
E^s u= E^j u+ E^cu
$$
with $E^j u$, the {\it jump part} of $E u$, i.e. the restriction of $E^s u$ to $J_u$ and $E^c u$ the {\it Cantor part} of $E u$, i.e. the restriction of $E^s u$ to $\O \setminus J_u$. In \cite{ACDM} it has been shown that 
\begin{equation}\label{Ju}
E^j u= (u^+- u^-)\odot\nu_u \H^{N-1}\left \lfloor J_u \right.
\end{equation}
where $\odot$ denotes the symmetric tensor product, defined by $a \odot b:= (a \otimes b + b \otimes a)/2$ for every $a,b \in \mathbb R^N$, and $\H^{N-1}\left\lfloor J_u \right.$ denotes the restriction of $\H^{N-1}$ to $J_u$, i.e. $(\H^{N-1}\left\lfloor J_u \right.)(B)= \H^{N-1}(B \cap J_u)$ for every Borel set $B \subseteq \O$, (and we then write $B \in {\cal B}(\O)$). 
Moreover it has been also proved that $|E^c u|(B)= 0$ for every $B \in {\cal B}( \O)$ such that $\H^{N-1}(B)< + \infty$, where $|\cdot |$ stands for the total variation. In the sequel, for every $u \in L^1_{loc}(\O;\mathbb R^N)$ we denote by $[u]$ the vector $u^+ -u^-$.
\bigskip
For any $y, \xi \in \mathbb R^N$, $\xi \not = 0$, and any $B \in {\cal B}(\Omega)$ let
\begin{equation}\label{sections}
\begin{array}{l}
\pi_\xi := \{y \in \mathbb R^N: y \cdot \xi = 0\},\\
B_y^\xi:=\{t \in \mathbb R: y + t \xi \in B\}, \\
B^\xi:= \{y \in \pi_\xi: B^\xi_y \not = \emptyset\},
\end{array}
\end{equation} 
i.e. $\pi_\xi$ is the hyperplane orthogonal to $\xi$ , passing through the origin and $B^\xi= p_\xi(B)$, where $p_\xi$, denotes the orthogonal projection onto $\pi_\xi$. $B^\xi_y$  is the one-dimensional section of $B$ on the straight line passing through $y$ in the direction of $\xi$. 

Given a function $u : B \to \mathbb R^N$, defined on a subset $B$ of $\mathbb R^N$, for every $y, \xi \in \mathbb R^N$, $\xi \not =0$, the function $u^\xi_y:B^\xi_y \to \mathbb R$ is defined by
\begin{equation}\label{usezione}
u^\xi_y(t):= u^\xi(y + t \xi)=u(y + t \xi) \cdot \xi  \hbox{ for all } t \in B^\xi_y.
\end{equation}

\noindent Following \cite{ACDM} we can say that a vector field $u$ belongs to $BD(\O)$ if and only if its '{\it projected sections}' $u^\xi_y$ belong to $BV(\O^\xi_y)$. More precisely the following  Structure Theorem (cf. \cite[Theorem 4.5]{ACDM}) holds.
\begin{Theorem}\label{Structure}
Let $u \in BD(\Omega)$ and let $\xi \in \mathbb R^N$ with $\xi \not  = 0$. Then
\begin{itemize}
\item[(i)] $E^a u \xi \cdot \xi= \int_{\Omega^\xi}D^a u^\xi_y d {\cal H}^{N-1}(y), |E^a u \xi\cdot \xi|= \int_{\Omega^\xi} |D^a u^\xi_y|d {\cal H}^{N-1}(y)$.
\item[(ii)] For ${\cal H}^{N-1}$-almost every $y \in \Omega^\xi$, the functions $u^\xi_y$ and  $\tilde{u}^\xi_y$ (the Lebesgue representative of $u$, cf. formula (2.5) in \cite{ACDM}) belong to $BV(\Omega^\xi_y)$ and coincide ${\cal L}^1$-almost everywhere on $\Omega^\xi_y$, the measures $|D u^\xi_y|$ and $V \tilde{u}^\xi_y$ (the pointwise variation of $\tilde{u}^\xi_t$ cf. formula (2.8) in \cite{ACDM}) coincide on $\Omega^\xi_y$ ,and ${\cal E}u(y + t \xi)\xi \cdot\xi = \nabla u^\xi_y(t)=(\tilde{u}^\xi_y)'(t)$ for ${\cal L}^1$-almost every $t \in \Omega^\xi_y$.
\item[(iii)] $E^j u \xi \cdot\xi=\int_{\Omega^\xi}D^j u^\xi_y d {\cal H}^{N-1}(y)$, $|E^j u \xi\cdot \xi|=\int_{\Omega^\xi}|D^j u^\xi_y|d {\cal H}^{N-1}(y)$.
\item[(iv)]$(J_u^\xi)^\xi_y=J_{u^\xi_y}$ for ${\cal H}^{N-1}$-almost every $y \in \Omega^\xi$ and for every $t \in (J^\xi_u)^\xi_y$
$$
\begin{array}{ll}
u^+(y+ t \xi)\cdot \xi=(u^\xi_y)^+(t)=\lim_{s \to t^+}\tilde{u}^\xi_y(s)\\
u^-(y + t\xi)\cdot \xi=(u^\xi_y)^-(t)=\lim_{s \to t^-}\tilde{u}^\xi_y(s),
\end{array}
$$
where the normals to $J_u$ and $J_{u^\xi_y}$ are oriented so that $\nu_u \cdot \xi \geq 0$ and $\nu_{u^\xi_y}=1$.
\item[(v)]$E^c u \xi\cdot \xi=\int_{\Omega^\xi}D^c u^\xi_y d {\cal H}^{N-1}(y), |E^c u \xi \cdot \xi|=\int_{\Omega^\xi}|D^c u^{\xi}_y|d {\cal H}^{N-1}(y)$.
\end{itemize}
\end{Theorem}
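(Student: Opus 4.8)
The plan is to reduce the $N$-dimensional lower semicontinuity to a family of scalar one-dimensional problems via the slicing machinery of Theorem \ref{Structure}, exploiting the supremum structure of $\Theta$ in (\ref{Thetadef}) together with the $BV$-ellipticity of $f$. Throughout, the superlinearity condition (\ref{1.2BCDM}) on $\gamma$ and the uniform bound (\ref{1.3modrem2.3BCDM}) guarantee, through the $SBD$-compactness of \cite{BCDM}, that $u\in SBD(\O)$ and that the absolutely continuous parts ${\cal E}u_h$ remain equiintegrable, so that no bulk energy can concentrate on $J_u$ in the limit. The non-interpenetration assertion (\ref{interpenetration}) passes to the limit by the same slicing reduction to the one-dimensional setting used for Theorem \ref{mainthmv} in \cite{GZ1}, and I would simply invoke it here.

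The first genuine step is to linearize the supremum defining $\Theta$. Since $f$ is continuous and $S^{N-1}$ is compact, $\Theta(i,j,p)=\sup_k f(i\cdot\xi_k,j\cdot\xi_k,p\cdot\xi_k)$ for a fixed countable dense set $\{\xi_k\}\subset S^{N-1}$, and by the standard lemma representing the integral of a supremum of nonnegative Borel functions as a supremum over finite Borel partitions one has
\[
\int_{J_u}\Theta(u^+,u^-,\nu_u)\,d\H^{N-1}=\sup\sum_{i=1}^n\int_{A_i\cap J_u} f(u^+\cdot\xi_{k_i},u^-\cdot\xi_{k_i},\nu_u\cdot\xi_{k_i})\,d\H^{N-1},
\]
the supremum running over finite Borel partitions $\{A_i\}$ of $\O$ and directions $\xi_{k_i}$. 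Since $\liminf$ is superadditive over finite sums and $\{A_i\}$ partitions $\O\supseteq J_{u_h}$, it then suffices to prove, for every fixed $\xi\in S^{N-1}$ and every $A\in{\cal B}(\O)$, the single-direction inequality
\[
\int_{A\cap J_u} f(u^+\cdot\xi,u^-\cdot\xi,\nu_u\cdot\xi)\,d\H^{N-1}\leq\liminf_{h\to+\infty}\int_{A\cap J_{u_h}}\Theta(u_h^+,u_h^-,\nu_{u_h})\,d\H^{N-1}.
\]

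To establish this I would slice in the direction $\xi$. By Theorem \ref{Structure}(iv) the projected section $u^\xi_y$ lies in $SBV(\O^\xi_y)$ for $\H^{N-1}$-a.e. $y\in\O^\xi$, with one-sided traces $u^\pm(y+t\xi)\cdot\xi$ at a jump point $t$; the area (projection) formula rewrites the surface integral over $J_u$ as an integral over $\pi_\xi$ of the corresponding one-dimensional jump sums, weighted by $|\nu_u\cdot\xi|$. From $u_h\to u$ in $L^1(\O;\RI^N)$ and Fubini, along a subsequence $(u_h)^\xi_y\to u^\xi_y$ in $L^1(\O^\xi_y)$ for $\H^{N-1}$-a.e. $y$, while Theorem \ref{Structure}(i),(iii),(v) applied to (\ref{1.3modrem2.3BCDM}) keeps the sliced energies equibounded. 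On each such line the one-dimensional lower semicontinuity follows from $BV$-ellipticity of $f$ (which in one dimension amounts to subadditivity of the jump density, the superlinearity of $\gamma$ ruling out the creation of spurious jumps by the diffuse derivative), and integrating these one-dimensional liminf inequalities over $y\in\pi_\xi$ by Fatou's lemma yields the sliced estimate.

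The crux, and the step I expect to be most delicate, is reconciling the explicit dependence of $f$ on the normal component $p\cdot\xi=\nu_u\cdot\xi$ with the slicing: projection onto lines parallel to $\xi$ intrinsically produces the weight $|\nu_u\cdot\xi|$ and a one-dimensional normal equal to $1$, not the value $f(\cdot,\cdot,\nu_u\cdot\xi)$ targeted above. The resolution is to use the supremum over $\{\xi_k\}$ jointly with the freedom to refine the partition $\{A_i\}$ as finely as the rectifiability of $J_u$ allows, so that on each piece $\nu_u$ is nearly constant and $\xi_{k_i}$ may be chosen close to $\nu_u$, forcing $\nu_u\cdot\xi_{k_i}\to 1$; it is precisely here that Definition \ref{BV-ellipticity} and the joint-convexity structure of the class $\Theta$ (cf. the characterization in \cite{GZ2}) are needed to certify that the one-dimensional inequalities assemble into the correct surface density. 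The remaining ingredients, namely $SBD$-compactness, equiintegrability of the bulk terms via superlinearity of $\gamma$, and the measurability required for the partition argument, are routine and follow the pattern of \cite{BCDM, GZ1, GZ2}.
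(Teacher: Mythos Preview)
Your proposal does not address the stated theorem. Theorem~\ref{Structure} is the Structure Theorem for $BD$ functions (the slicing description of $E^a u$, $E^j u$, $E^c u$ and of the jump set along lines), and the paper does not prove it: it is recalled verbatim from \cite[Theorem~4.5]{ACDM} as a preliminary tool. What you have written is instead a sketch of the proof of Theorem~\ref{mainthm}, the lower semicontinuity of $\int_{J_u}\Theta(u^+,u^-,\nu_u)\,d\H^{N-1}$; you explicitly invoke Theorem~\ref{Structure} as an ingredient rather than as the object to be established. So as a proof of the assigned statement the proposal is simply off target.

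If your intended target was Theorem~\ref{mainthm}, your outline is close to the paper's actual argument (Lemma~\ref{lscalongdir} for the single-direction inequality, then Lemma~\ref{lemma2.35AFP} to pass the supremum inside the integral), but the paragraph you flag as ``the crux'' betrays a misconception. No refinement of partitions forcing $\nu_u\cdot\xi_{k_i}\to 1$ is needed: the paper works with the positively $1$-homogeneous extension of $f$ in its last variable, so that $f(a_1,a_2,\nu_u\cdot\xi)=|\nu_u\cdot\xi|\,f(a_1,a_2,1)$, and this factor $|\nu_u\cdot\xi|$ is exactly the weight produced by the area formula (\ref{changeofvariables}) when slicing along $\xi$; see (\ref{eqscilemma2}) in Lemma~\ref{lscalongdir}. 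Hence the ``delicate reconciliation'' you anticipate is immediate, and no appeal to any joint-convexity characterization from \cite{GZ2} is required at that step.
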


The space $SBD(\O)$ of {\it special vector fields with bounded deformation} is defined as the set of all $u \in BD(\O)$ such that $E^cu=0$, or, in other words
$$
E u= \E u {\cal L}^N + [u]\odot \nu_u \H^{N-1}\left\lfloor J_u \right. 
$$

We also recall that if $\Omega \subset \mathbb R$, then the space $SBD(\O)$ coincides with the space of real valued special functions of bounded variations $SBV(\Omega)$, consisting of the functions whose distributional gradient is a Radon measure with no Cantor part (see \cite{AFP} for a comprehensive treatment of the subject).

Furthermore we restate \cite[Proposition 4.7]{ACDM} to be exploited in the sequel.
\begin{Proposition}\label{prop4.7}
Let $u \in BD(\Omega)$ and let $\xi_1, \dots, \xi_N$ be a basis of $\mathbb R^N$. Then the following three conditions are equivalent:
\begin{itemize}
\item[(i)] $u \in SBD(\Omega)$.
\item[(ii)] For every $\xi=\xi_i+ \xi_j$ with $1 \leq i, j \leq n$, we have $u ^\xi_y \in SBV(\Omega^\xi_y)$ for ${\cal H}^{N-1}$-almost every $y \in \Omega^\xi$.
\item[(iii)] The measure $|E^s u|$ is concentrated on a Borel set $B \subset \Omega$ which is $\sigma$-finite with respect to ${\cal H}^{N-1}$.
\end{itemize}
\end{Proposition}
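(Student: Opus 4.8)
The plan is to prove the two equivalences (i)$\Leftrightarrow$(iii) and (i)$\Leftrightarrow$(ii) separately, both hinging on the defining identity $E^cu=0$ for membership in $SBD(\O)$, and on the two structural facts recalled above: that $J_u$ is countably $(\H^{N-1},N-1)$-rectifiable, and that $|E^cu|(B)=0$ whenever $B\in{\cal B}(\O)$ has $\H^{N-1}(B)<+\infty$. The slicing identities of Theorem \ref{Structure}, and in particular part (v), will serve as the bridge between the one-dimensional sections $u^\xi_y$ and the Cantor part $E^cu$.

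First I would treat (i)$\Leftrightarrow$(iii). If $u\in SBD(\O)$, then $E^su=E^ju$ is concentrated on $J_u$, which is $\sigma$-finite with respect to $\H^{N-1}$ by rectifiability, so (iii) holds. Conversely, assume (iii) and write $B=\bigcup_k B_k$ with $\H^{N-1}(B_k)<+\infty$ and $|E^su|$ concentrated on $B$. Since $E^ju$ and $E^cu$ are the restrictions of $E^su$ to the disjoint sets $J_u$ and $\O\setminus J_u$, they are mutually singular and $|E^cu|\le|E^su|$, so $|E^cu|$ too is concentrated on $B$; but $|E^cu|(B_k)=0$ for each $k$ by the vanishing property, whence $|E^cu|(B)=0$ and therefore $E^cu=0$, i.e. (i).

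Next, (i)$\Leftrightarrow$(ii). Fix $\xi=\xi_i+\xi_j$. By Theorem \ref{Structure}(ii) the section $u^\xi_y$ lies in $BV(\O^\xi_y)$ for $\H^{N-1}$-a.e. $y\in\O^\xi$, so for such $y$ the condition $u^\xi_y\in SBV(\O^\xi_y)$ is equivalent to $D^cu^\xi_y=0$. The total-variation identity in Theorem \ref{Structure}(v),
\begin{equation*}
|E^cu\,\xi\cdot\xi|=\int_{\O^\xi}|D^cu^\xi_y|\,d\H^{N-1}(y),
\end{equation*}
then shows that the scalar measure $E^cu\,\xi\cdot\xi$ vanishes if and only if $D^cu^\xi_y=0$ for $\H^{N-1}$-a.e. $y$. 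Consequently (ii) is equivalent to the vanishing of $E^cu\,\xi\cdot\xi$ for all $\xi=\xi_i+\xi_j$ with $1\le i,j\le N$. If $u\in SBD(\O)$ then $E^cu=0$ and all these vanish, giving (ii). For the converse I would write $E^cu=g\,|E^cu|$ with $g$ a symmetric-matrix-valued Radon--Nikodym density, so that $E^cu\,\xi\cdot\xi=(g\,\xi\cdot\xi)\,|E^cu|$; taking $i=j$ yields $g\,\xi_i\cdot\xi_i=0$ and then the sum directions yield $g\,(\xi_i+\xi_j)\cdot(\xi_i+\xi_j)=0$, so the polarization identity applied $|E^cu|$-a.e. gives $g\,\xi_i\cdot\xi_j=0$ for all $i,j$; since $\{\xi_i\}$ is a basis this forces $g=0$ $|E^cu|$-a.e., i.e. $E^cu=0$ and hence (i).

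The step I expect to require the most care is this final polarization: the quadratic form $\xi\mapsto E^cu\,\xi\cdot\xi$ is controlled only along the finitely many directions $\xi_i+\xi_j$, and one must pass from the vanishing of these scalar \emph{measures} to the vanishing of the matrix-valued measure $E^cu$. Factoring every measure through the common reference $|E^cu|$ and invoking the classical polarization identity on the density $g$ pointwise $|E^cu|$-a.e. is what makes this rigorous, and the choice of the sum directions $\xi_i+\xi_j$ in condition (ii) is exactly what supplies the off-diagonal information that polarization needs.
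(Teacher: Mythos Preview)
The paper does not prove this proposition; it is merely restated from \cite[Proposition 4.7]{ACDM} as a preliminary tool, so there is no ``paper's own proof'' to compare against. Your argument is correct and is essentially the one given in the original reference: the equivalence (i)$\Leftrightarrow$(iii) follows directly from the rectifiability of $J_u$ together with the vanishing of $|E^cu|$ on sets of finite $\H^{N-1}$-measure, while (i)$\Leftrightarrow$(ii) is obtained from part (v) of the Structure Theorem (Theorem \ref{Structure}) combined with polarization on the density $g$ of $E^cu$ with respect to $|E^cu|$. The only cosmetic point is that in your last step you conclude $g=0$ $|E^cu|$-a.e.; since $|g|=1$ $|E^cu|$-a.e.\ by definition of the polar decomposition, this forces $|E^cu|=0$, which is the conclusion you want---you might make that final sentence explicit.
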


Moreover, following \cite{ACDM} we give:
\begin{Definition}\label{Def4.9}
For any $u \in BD(\Omega)$ we define the non-negative Borel measure $\lambda_u$ on $\Omega$ as
\begin{equation}\label{lambdau}
\lambda_u(B):=\frac{1}{2 \omega_{N-1}}\int_{S^{N-1}}\lambda^\xi_u(B)d {\cal H}^{N-1}(\xi)  \;\; \forall B \in {\cal B}(\Omega),
\end{equation}
where, for every $\xi \in S^{N-1}$
\begin{equation}\label{lambdaxiu}
\lambda_u^\xi(B):= \int_{\Omega^\xi}{\cal H}^0(J_{u^\xi_y}\cap B^\xi_y)d {\cal H}^{N-1}(y) \;\; \forall B \in {\cal B}(\Omega).
\end{equation}
\end{Definition}

Let
\begin{equation}\label{1.6bis}
J_u^\xi:=\left\{x \in J_u: [u]\cdot \xi \not = 0\right\},
\end{equation}
we recall that

\begin{equation}\label{Junegligible}
{\cal H}^{N-1}(J_u \setminus J_u^\xi)=0 \hbox{ for }{\cal H}^{N-1}-\hbox{a.e. }\xi \in S^{N-1}. 
\end{equation}

The following result is a consequence of the Structure Theorem

\begin{Theorem}\label{thm4.10}
For every $u \in BD(\Omega)$ and any $\xi \in S^{N-1}$,
\begin{equation}\label{lambdaxiuarea}
\lambda^\xi_u(B)=\int_{J^\xi_u \cap B}|\nu_u  \cdot \xi|d {\cal H}^{N-1} \;\; \forall B  \in {\cal B}(\Omega),
\end{equation} 
where $\nu_u$ is the approximate unit normal to $J_u$. Moreover $\lambda_u= {\cal H}^{N-1}\lfloor J_u$. 
\end{Theorem}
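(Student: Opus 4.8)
The plan is to obtain both assertions directly from the one-dimensional Structure Theorem \ref{Structure}, combined with two classical integral-geometric facts: the area formula for orthogonal projections of rectifiable sets, and Cauchy's identity $\int_{S^{N-1}}|\nu\cdot\xi|\,d\mathcal{H}^{N-1}(\xi)=2\omega_{N-1}$ valid for every $\nu\in S^{N-1}$.

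First I would fix $\xi\in S^{N-1}$ and $B\in\mathcal{B}(\Omega)$ and rewrite $\lambda^\xi_u(B)$ as a projection count. By \eqref{lambdaxiu} it equals $\int_{\Omega^\xi}\mathcal{H}^0(J_{u^\xi_y}\cap B^\xi_y)\,d\mathcal{H}^{N-1}(y)$, and part (iv) of Theorem \ref{Structure} gives $J_{u^\xi_y}=(J_u^\xi)^\xi_y$ for $\mathcal{H}^{N-1}$-a.e. $y\in\Omega^\xi$, whence $J_{u^\xi_y}\cap B^\xi_y=(J_u^\xi\cap B)^\xi_y$. Writing $S:=J_u^\xi\cap B$ (a Borel, hence countably $(\mathcal{H}^{N-1},N-1)$-rectifiable, subset of $J_u$) and $p_\xi$ for the orthogonal projection onto $\pi_\xi$, this turns $\lambda^\xi_u(B)$ into $\int_{\pi_\xi}\mathcal{H}^0(S\cap p_\xi^{-1}(y))\,d\mathcal{H}^{N-1}(y)$. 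Next I would apply the area formula to $p_\xi$ restricted to $S$: decomposing $S$, up to an $\mathcal{H}^{N-1}$-null set, into countably many disjoint Borel pieces $S_k$ lying on embedded $C^1$ hypersurfaces $M_k$ whose classical tangent plane coincides, $\mathcal{H}^{N-1}$-a.e.\ on $S_k$, with the approximate tangent plane $\nu_u(x)^\perp$ to $J_u$, the area formula on each $M_k$ yields $\int_{\pi_\xi}\mathcal{H}^0(S_k\cap p_\xi^{-1}(y))\,d\mathcal{H}^{N-1}(y)=\int_{S_k}J^{M_k}_{p_\xi}\,d\mathcal{H}^{N-1}$, where the tangential Jacobian $J^{M_k}_{p_\xi}(x)$ is the modulus of the determinant of the linear map $p_\xi|_{T_xM_k}\colon\nu_u(x)^\perp\to\xi^\perp$; an elementary linear-algebra computation identifies this with $|\nu_u(x)\cdot\xi|$. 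Summing over $k$ (non-negative terms, and $\mathcal{H}^{N-1}\lfloor J_u$ is $\sigma$-finite by \cite{ACDM}, so monotone convergence applies) and using $\mathcal{H}^0(S\cap p_\xi^{-1}(y))=\sum_k\mathcal{H}^0(S_k\cap p_\xi^{-1}(y))$ would give exactly \eqref{lambdaxiuarea}.

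For the ``moreover'' part I would insert \eqref{lambdaxiuarea} into the definition \eqref{lambdau} of $\lambda_u$. By \eqref{Junegligible}, $\mathcal{H}^{N-1}(J_u\setminus J_u^\xi)=0$ for $\mathcal{H}^{N-1}$-a.e.\ $\xi\in S^{N-1}$, so $\int_{J_u^\xi\cap B}|\nu_u\cdot\xi|\,d\mathcal{H}^{N-1}=\int_{J_u\cap B}|\nu_u\cdot\xi|\,d\mathcal{H}^{N-1}$ for a.e.\ such $\xi$, and Tonelli's theorem (non-negative integrands, measurability from \cite{ACDM}) gives
\[
\lambda_u(B)=\frac{1}{2\omega_{N-1}}\int_{J_u\cap B}\Bigl(\int_{S^{N-1}}|\nu_u(x)\cdot\xi|\,d\mathcal{H}^{N-1}(\xi)\Bigr)\,d\mathcal{H}^{N-1}(x).
\]
The inner integral is, by rotational invariance of $\mathcal{H}^{N-1}\lfloor S^{N-1}$, a constant independent of $\nu_u(x)\in S^{N-1}$, and a direct computation (Cauchy's projection formula, checked e.g.\ for $\nu=e_N$) gives the value $2\omega_{N-1}$; hence $\lambda_u(B)=\mathcal{H}^{N-1}(J_u\cap B)$ for all $B\in\mathcal{B}(\Omega)$, i.e.\ $\lambda_u=\mathcal{H}^{N-1}\lfloor J_u$.

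The step I expect to be the main obstacle is the passage from the one-dimensional sectional count to the surface integral in the second paragraph: it requires choosing a rectifiable decomposition of $S$ compatible with the approximate tangent plane of $J_u$, computing the tangential Jacobian of $p_\xi$ explicitly, and being careful that $\mathcal{H}^{N-1}\lfloor J_u$ need only be $\sigma$-finite rather than finite. Once this, Theorem \ref{Structure}(iv), \eqref{Junegligible} and Cauchy's identity are in place, the rest is routine bookkeeping.
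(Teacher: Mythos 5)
Your argument is correct. Note that the paper itself offers no proof of this statement: it is recalled from Ambrosio--Coscia--Dal Maso \cite{ACDM} with only the remark that it is ``a consequence of the Structure Theorem'', and your proof --- Theorem \ref{Structure}(iv) to identify the sections $(J_u^\xi\cap B)^\xi_y$ with $J_{u^\xi_y}\cap B^\xi_y$, the area formula for $p_\xi$ restricted to the countably rectifiable set $J_u^\xi\cap B$ with tangential Jacobian $|\nu_u\cdot\xi|$, and then (\ref{Junegligible}), Tonelli and Cauchy's identity $\int_{S^{N-1}}|\nu\cdot\xi|\,d{\cal H}^{N-1}(\xi)=2\omega_{N-1}$ for the ``moreover'' part --- is precisely the standard argument given there.
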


\noindent 
A standard approximation argument by simple functions, proves, more generally, that for every Borel function $g:\Omega \to [0,+\infty]$, it results
\begin{equation}\label{changeofvariables}
\int_{J_u^\xi\cap B}g(y)|\nu_u \cdot \xi|d {\cal H}^{N-1}(y)=\int_{\Omega^\xi}\int_{p_\xi(J_u^\xi\cap B)}g(y+ t \xi)d{\cal H}^{0}(t)d {\cal H}^{N-1}(y)
\end{equation}
for any $\xi \in S^{N-1}$.

\medskip

We recall the following compactness result for sequences in $SBD$  proved in \cite[Theorem 1.1 and Remark 2.3]{BCDM}.

\begin{Theorem}\label{compactnessBCDM}
Let $\gamma:[0, + \infty[ \to [0, +\infty[$ be a non-decreasing function such that
\begin{equation}\label{1.2BCDM}
\displaystyle{\lim_{t \to +\infty}\frac{\gamma(t)}{t}=+ \infty.}
\end{equation} 
Let $\{u_h\}$ be a sequence in $SBD(\O)$ such that
\begin{equation}\label{1.3modrem2.3BCDM}
\|u_h\|_{L^{\infty}(\O;\mathbb R^N)}+ \int_{\O}\gamma(|\E u_h|)dx + \H^{N-1}(J_{u_h}) \leq K
\end{equation}
for some constant $K$ independent of $h$. Then there exists a subsequence, still denoted by $\{u_h\}$, and a function $u \in SBD(\O)$ such that
\begin{equation}\label{1.4BCDM}
u_h \to u \hbox{ strongly in }L^1_{loc}(\O;\mathbb R^N), 
\end{equation}
\begin{equation}\label{1.5BCDM}
\E u_h \rightharpoonup \E u \hbox{ weakly in }L^1(\O; M_{sym}^{N \x N}),
\end{equation}
\begin{equation}\label{1.6BCDM}
E^j u_h \rightharpoonup E^j u \hbox{ weakly* in } {\cal M}_b(\O; M_{sym}^{N \x N}),
\end{equation}
\begin{equation}\label{1.7BCDM}
\H^{N-1}(J_u) \leq \liminf_{h \to + \infty} \H^{N-1}(J_{u_h}).
\end{equation}
\end{Theorem}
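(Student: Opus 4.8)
The plan is to recover the four conclusions by combining $BD$ compactness for the candidate limit, the de la Vall\'ee--Poussin/Dunford--Pettis machinery for the absolutely continuous part, and the one-dimensional $SBV$ theory transported through the slicing apparatus of Theorem \ref{Structure} and Proposition \ref{prop4.7}. First I would produce the limit and (\ref{1.4BCDM}). The bound (\ref{1.3modrem2.3BCDM}) controls each piece of $Eu_h$: the $L^\infty$ bound gives $\|u_h\|_{L^1(\O)}\le K|\O|$; superlinearity (\ref{1.2BCDM}) forces $\gamma(t)\ge t-b$ for a suitable $b\ge 0$, so $\int_\O|\E u_h|\,dx\le K+b|\O|$; and, since $|E^ju_h|(\O)=\int_{J_{u_h}}|[u_h]\odot\nu_{u_h}|\,d\H^{N-1}\le 2\|u_h\|_{L^\infty}\H^{N-1}(J_{u_h})\le 2K^2$, the jump measure is bounded in total variation. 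As $u_h\in SBD(\O)$ has $E^cu_h=0$, the sequence is bounded in $BD(\O)$, and bounded sequences in $BD(\O)$ are relatively compact in $L^1_{loc}(\O;\rr^N)$; along a subsequence $u_h\to u$ in $L^1_{loc}$ and $Eu_h\weakstar Eu$ in ${\cal M}_b(\O;M^{N\x N}_{sym})$ for some $u\in BD(\O)$, which is (\ref{1.4BCDM}).

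Next I would isolate the limits of the two parts. By (\ref{1.2BCDM}) and $\int_\O\gamma(|\E u_h|)\,dx\le K$, the de la Vall\'ee--Poussin criterion makes $\{\E u_h\}$ equi-integrable, hence (Dunford--Pettis) weakly relatively compact in $L^1$; passing to a further subsequence, $\E u_h\weak g$ in $L^1(\O;M^{N\x N}_{sym})$. Separately, the total-variation bound on $E^ju_h$ yields $E^ju_h\weakstar\mu$ in ${\cal M}_b$ along a subsequence. The candidate convergences (\ref{1.5BCDM})--(\ref{1.6BCDM}) are only established once I identify $g=\E u$ and $\mu=E^ju$, i.e.\ once $u$ is shown to lie in $SBD(\O)$ with the expected decomposition.

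The crux is this identification, which I would carry out by slicing. Fixing a basis $\xi_1,\dots,\xi_N$ and the directions $\xi=\xi_i+\xi_j$ of Proposition \ref{prop4.7}(ii), for $\H^{N-1}$-a.e.\ $y\in\O^\xi$ the sections $(u_h)^\xi_y$ are $SBV(\O^\xi_y)$ functions, uniformly bounded in $L^\infty$, with absolutely continuous derivative $\nabla(u_h)^\xi_y(t)=\E u_h(y+t\xi)\xi\cdot\xi$ (Theorem \ref{Structure}(ii)) equi-integrable after integration in $y$, and with jump count controlled: by Theorem \ref{thm4.10},
$$
\int_{\O^\xi}\H^0\big(J_{(u_h)^\xi_y}\big)\,d\H^{N-1}(y)=\lambda^\xi_{u_h}(\O)=\int_{J_{u_h}^\xi}|\nu_{u_h}\cdot\xi|\,d\H^{N-1}\le\H^{N-1}(J_{u_h})\le K.
$$
Since $u_h\to u$ in $L^1_{loc}$, for a.e.\ $(\xi,y)$ the slices converge in $L^1$ to $u^\xi_y$, so the one-dimensional $SBV$ closure theorem applies fibrewise: $u^\xi_y\in SBV(\O^\xi_y)$ with $(u^\xi_y)'$ the weak-$L^1$ limit of the slice derivatives and $\H^0(J_{u^\xi_y})\le\liminf_h\H^0(J_{(u_h)^\xi_y})$. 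By Proposition \ref{prop4.7}(ii), having $u^\xi_y\in SBV$ for the chosen directions forces $u\in SBD(\O)$; reassembling the fibre derivatives through Theorem \ref{Structure}(i)--(ii) identifies $g=\E u$, while the fibre jumps reassembled through Theorem \ref{Structure}(iii)--(iv) identify $\mu=E^ju$, giving (\ref{1.5BCDM}) and (\ref{1.6BCDM}). Finally, (\ref{1.7BCDM}) follows from the integral-geometric representation: by Definition \ref{Def4.9} and Theorem \ref{thm4.10}, $\H^{N-1}(J_u)=\lambda_u(\O)=\frac{1}{2\omega_{N-1}}\int_{S^{N-1}}\lambda^\xi_u(\O)\,d\H^{N-1}(\xi)$, and likewise for each $u_h$; applying the fibrewise bound $\H^0(J_{u^\xi_y})\le\liminf_h\H^0(J_{(u_h)^\xi_y})$ with Fatou's lemma first in $y$ and then in $\xi$ gives $\lambda^\xi_u(\O)\le\liminf_h\lambda^\xi_{u_h}(\O)$ and hence the claim.

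The main obstacle is the identification step: neither $Eu_h\weakstar Eu$ nor $\E u_h\weak g$ precludes, on its own, the birth of a Cantor part in $Eu$ or a mismatch between $g$, $\mu$ and the true decomposition of $Eu$. It is exactly the uniform one-dimensional jump count, which confines $|E^su|$ to a set that is $\sigma$-finite with respect to $\H^{N-1}$ and hence places $u\in SBD(\O)$ via Proposition \ref{prop4.7}(iii), together with the equi-integrability that prevents concentration of the absolutely continuous part, that forces $\E u=g$ and $E^ju=\mu$ and closes the argument.
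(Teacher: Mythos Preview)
The paper does not prove Theorem \ref{compactnessBCDM}; it is recalled without proof from \cite[Theorem~1.1 and Remark~2.3]{BCDM} as a preliminary tool, so there is no in-paper argument to compare against. Your sketch is in fact a faithful outline of the original proof in \cite{BCDM}: bound the sequence in $BD$ to extract an $L^1_{loc}$ limit, use de la Vall\'ee--Poussin/Dunford--Pettis for the equi-integrability and weak $L^1$ compactness of $\{\E u_h\}$, slice along directions $\xi$ via Theorem \ref{Structure}, apply the one-dimensional $SBV$ closure theorem fibrewise, and reassemble through Proposition \ref{prop4.7} and the integral-geometric representation of Theorem \ref{thm4.10} to get $u\in SBD(\O)$, the identifications $g=\E u$, $\mu=E^ju$, and the lower semicontinuity of $\H^{N-1}(J_u)$.

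One technical point is glossed over: $L^1_{loc}$ convergence of $u_h$ does not directly yield $L^1$ convergence of the slices $(u_h)^\xi_y$ for $\H^{N-1}$-a.e.\ $y$; Fubini only gives convergence of the slice $L^1$-norms in $L^1(\O^\xi)$, hence in measure in $y$, so a further ($\xi$- and $y$-dependent) subsequence extraction is needed before the one-dimensional closure theorem applies. This is handled in \cite{BCDM} and is exactly the manoeuvre reproduced in the present paper in the proof of Lemma \ref{lscalongdir} (see the passage from $\{u_j\}$ to $\{u_m\}$ around \eqref{2.9bcdm}). With that caveat, your argument is correct and coincides with the source the paper cites.
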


\medskip

We will also make use of the following result from Measure Theory \cite[Lemma 2.35]{AFP}  
\begin{Lemma}\label{lemma2.35AFP}
Let $\lambda$ be a positive $\sigma$-finite Borel measure in $\Omega$ and let $\varphi_i:\Omega \to [0, \infty]$, $i \in \mathbb N$, be Borel functions. Then
$$
\int_\Omega \sup_i \varphi_i d \lambda = \sup  \left\{\sum_{i \in I} \int_{A_i}\varphi _i d \lambda\right\}
$$
where the supremum ranges over all finite sets $I \subset \mathbb N$ and all families $\{A_i\}_{i \in I}$ of pairwise disjoint open sets with compact closure in $\Omega$.
\end{Lemma}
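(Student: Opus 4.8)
The plan is to establish the two inequalities separately, writing throughout $\varphi:=\sup_i\varphi_i$ and $S:=\sup\{\sum_{i\in I}\int_{A_i}\varphi_i\,d\lambda\}$ for the right-hand side. The inequality $S\le\int_\Omega\varphi\,d\lambda$ is the routine one: for any admissible finite family $\{A_i\}_{i\in I}$ of pairwise disjoint open sets with compact closure in $\Omega$, using $\varphi_i\le\varphi$ on each $A_i$ and disjointness, $\sum_{i\in I}\int_{A_i}\varphi_i\,d\lambda\le\sum_{i\in I}\int_{A_i}\varphi\,d\lambda=\int_{\bigcup_{i\in I}A_i}\varphi\,d\lambda\le\int_\Omega\varphi\,d\lambda$, and taking the supremum over all such families gives the claim.

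For the reverse inequality $\int_\Omega\varphi\,d\lambda\le S$, I would first reduce to finitely many functions. Since $\max_{i\le n}\varphi_i\uparrow\varphi$ pointwise, monotone convergence yields $\int_\Omega\varphi\,d\lambda=\sup_n\int_\Omega\max_{i\le n}\varphi_i\,d\lambda$, so it suffices to prove $\int_\Omega\max_{i\le n}\varphi_i\,d\lambda\le S$ for each fixed $n$. For a finite collection $\varphi_1,\dots,\varphi_n$ I would partition $\Omega$ into the disjoint Borel sets $B_i:=\{\varphi_i=\max_{j\le n}\varphi_j\}\setminus\bigcup_{k<i}B_k$, on which $\max_{j\le n}\varphi_j=\varphi_i$, so that $\int_\Omega\max_{j\le n}\varphi_j\,d\lambda=\sum_{i=1}^n\int_{B_i}\varphi_i\,d\lambda$.

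It then remains to replace the Borel sets $B_i$ by pairwise disjoint open sets with compact closure in $\Omega$ while losing arbitrarily little mass; this is the crux. Fixing $\varepsilon>0$ and exploiting that $\lambda$ is $\sigma$-finite (hence Radon on $\Omega\subset\mathbb R^N$), I would apply inner regularity to the nonnegative Borel measures $\nu_i$ defined by $\nu_i(E):=\int_{E\cap B_i}\varphi_i\,d\lambda$: choose compact $K_i\subset B_i$ with $\int_{K_i}\varphi_i\,d\lambda\ge\int_{B_i}\varphi_i\,d\lambda-\varepsilon/n$ when the latter is finite, and with $\int_{K_i}\varphi_i\,d\lambda$ arbitrarily large otherwise (the infinite case handled by first truncating $\varphi_i$ at height $m$ and exhausting $B_i$ by sets of finite $\lambda$-measure before invoking inner regularity). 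The $K_i$ are pairwise disjoint compacta contained in the open set $\Omega$, hence have mutual positive distance and positive distance to $\partial\Omega$, so they can be enclosed in pairwise disjoint open sets $A_i\supseteq K_i$ with $\overline{A_i}\subset\Omega$ compact (small tubular neighborhoods). Since $\varphi_i\ge0$ and $A_i\supseteq K_i$, $\sum_i\int_{A_i}\varphi_i\,d\lambda\ge\sum_i\int_{K_i}\varphi_i\,d\lambda\ge\int_\Omega\max_{j\le n}\varphi_j\,d\lambda-\varepsilon$, whence $S\ge\int_\Omega\max_{j\le n}\varphi_j\,d\lambda-\varepsilon$; letting $\varepsilon\to0$ and then $n\to\infty$ closes the argument.

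The main obstacle is precisely this approximation step: securing inner regularity of the weighted measures $\nu_i$ — where $\sigma$-finiteness and the resulting Radon property on $\mathbb R^N$ are essential, and where the possibly infinite integrals or infinite-valued integrands force the truncation-and-exhaustion refinement — combined with the topological separation of disjoint compacta by relatively compact, pairwise disjoint open sets inside $\Omega$. The remaining ingredients (monotone convergence, the pointwise partition, and monotonicity of the integral in the domain) are routine.
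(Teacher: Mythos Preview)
The paper does not prove this lemma; it is quoted verbatim from \cite[Lemma~2.35]{AFP} and used as a black box in the proof of Theorem~\ref{mainthm}. So there is no ``paper's own proof'' to compare against.

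That said, your argument is correct and is essentially the standard one (and matches the proof given in \cite{AFP}). The easy inequality is immediate; for the reverse one you correctly (i) reduce to finitely many $\varphi_i$ via monotone convergence, (ii) partition $\Omega$ into Borel sets $B_i$ on which $\varphi_i$ realises the maximum, and (iii) approximate each $B_i$ from inside by a compact $K_i$ using inner regularity, then separate the disjoint compacta by pairwise disjoint open sets $A_i$ with $\overline{A_i}\subset\Omega$. Your care with the possibly infinite integrals---truncating $\varphi_i$ and exhausting by sets of finite $\lambda$-measure so as to work with finite Borel measures on the Polish space $\Omega$, where inner regularity is automatic---is exactly the right way to make step (iii) rigorous, and this is precisely where the $\sigma$-finiteness hypothesis enters.
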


Following \cite{A2}  (see also \cite[Definitions 5.13 and 5.17 respectively]{AFP}) we recall the notions of $BV$-ellipticity and joint convexity, (the first notion was already introduced in \cite{AB1, AB2} in order to describe sufficient conditions for lower semicontinuity in $SBV$ for surface integrals).

We stress that the definitions below we are referring to (i.e. $BV$-ellipticity and joint convexity)),  appear slightly different from those stated in \cite{AFP}, but we emphasize that for the applications to lower semicontinuity problems with respect to convergence (\ref{1.4BCDM}) $\div$ (\ref{1.7BCDM}) we have in mind, they can be considered as `equivalent'.
Indeed, what really matters to that aim, is to have the sequences $\{u_h\}\subset SBD(\O)$ with range in a suitable compact set of $\mathbb R^N$, (related to the considered energy density). This fact is evident in the arguments used in the proofs of lower semicontinuity results in the original articles (see \cite{A2} and also \cite{AFP}).

Let $Q_\nu$ be an open cube of $\mathbb R^N$, centred at $0$, with side lenght $1$ and faces either parallel or orthogonal to $\nu\in S^{N-1}$ and let $u_{i,j,\nu}$ be the function defined as
$u_{i,j,\nu}=\left\{\begin{array}{ll}
i &\hbox{ if }y \cdot \nu>0,\\
j &\hbox{ if } y \cdot \nu <0 \end{array}
\right.$.   
\begin{Definition}\label{BV-ellipticity}
Let $T\subset \mathbb R^m$ be a finite set, and $f:T\times T \times S^{N-1}\to[0,+\infty[$. A function $f$ is said to be $BV$-elliptic if
\begin{equation}\label{BVelliequ}
\int_{J_\nu}f(v^+,v^-,\nu_v)d {\cal H}^{N-1} \geq f(i,j,\nu)
\end{equation}
for any bounded piecewise constant function $v:Q_\nu \to T$ such that $\{v\not= u_{i,j,\nu}\}\subset \subset Q_\nu$ and any triplet $(i,j,\nu)$ in the domain of $f$.
\end{Definition}

A function $f: \mathbb R^m \times \mathbb R^m \times S^{N-1}\to[0, +\infty[$ is said $BV$-elliptic if it verifies (\ref{BVelliequ}) for any finite set $T \subset \mathbb R^m$.

In the sequel, with an abuse of notations we will use the same symbol for any $BV$-elliptic function and its positive $1$-homogeneous extension in the last variable.

\begin{Definition}\label{jointconvexity}
Let $f:\mathbb R^m \times \mathbb R^m \times \mathbb R^N\to [0, + \infty]$. We say that $f$ is jointly convex if
$$
f(i,j,p)=sup_{h \in \mathbb N}\{(g_h(i)-g_h (j))\cdot p\} \;\; \forall (i,j,p) \in \mathbb R^m\times \mathbb R^m \times \mathbb R^N,
$$
for some sequence $\{g_h\} \subset [C_0(\mathbb R^m)]^N$.
\end{Definition}

\noindent The above notion was introduced in \cite{A2} with the name of regular `bi-convexity', see Lemma 3.4 therein.

We also recall, as proven in \cite{A2} (see also \cite{AFP}), that joint convexity implies $BV$-ellipticity, and the equivalence between the two notions is still an open problem, even if there are some classes of function for which the two notions are proven to be equivalent (see \cite[Example 5.1]{A2} and Example 3.5 herein). On the other hand $BV$-ellipticity is very difficult to verify in practice, whereas this is not the case for joint convexity. Moreover, necessarily any jointly convex function is lower semicontinuous and $f(i,j,p)=f(j,i,-p), f(i,i,p)= 0 \;\; \forall i, j \in \mathbb R^m, p \in \mathbb R^N,$ $ f(i, j, \cdot)$  is positively $1$-homogeneous and convex, $\forall i,j \in \mathbb R^m$.

In \cite{A2} (see Theorem 3.3 therein) it has been proven the following theorem that will be invoked in the proof of Theorem \ref{mainthm}.

\begin{Theorem}\label{lscbvelliptA2}
Let $f:\mathbb R^m \times \mathbb R^m \times S^{N-1}\to [0, +\infty[$ be a continuous $BV$- elliptic function. Let $\{u_h\} \subset SBV(\Omega; \mathbb R^m)$ be a sequence converging in $L^1(\Omega;\mathbb R^m)$ to $u$ such that $\|u_h\|_{L^\infty}$  and ${\cal H}^{N-1}(J_{u_h})$ are bounded and $\{|\nabla u_h|\}$ is equiintegrable. Then $u \in SBV(\Omega;\mathbb R^d)$ and 
$$
\displaystyle{\int_{J_u} f(u^+, u^-, \nu_u)d {\cal H}^{N-1} \leq \liminf_h \int_{J_{u_h}} f(u^+_h, u^-_h, \nu_h) d {\cal H}^{N-1}.}
$$
\end{Theorem}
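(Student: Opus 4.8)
The plan is to prove the statement by the De Giorgi--Ambrosio blow-up method, invoking $BV$-ellipticity only in its defining form (on piecewise constant competitors) at the very last step. First I would secure the structure of the limit: the bounds $\|u_h\|_{L^\infty}+\mathcal H^{N-1}(J_{u_h})\le C$ together with the equiintegrability of $\{|\nabla u_h|\}$ put us in the hypotheses of Ambrosio's $SBV$ closure theorem, so that $u\in SBV(\Omega;\mathbb R^m)$, $\nabla u_h\rightharpoonup\nabla u$ weakly in $L^1$ and $D^ju_h\rightharpoonup D^ju$ weakly${}^*$. Passing to a subsequence realizing the $\liminf$, I would introduce the nonnegative measures $\mu_h:=f(u_h^+,u_h^-,\nu_{u_h})\,\mathcal H^{N-1}\lfloor J_{u_h}$, which have uniformly bounded mass because the $L^\infty$ bound confines $(u_h^+,u_h^-)$ to a fixed compact set on which the continuous $f$ is bounded. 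Extracting $\mu_h\rightharpoonup^*\mu$, one has $\liminf_h\int_{J_{u_h}}f\,d\mathcal H^{N-1}=\mu(\Omega)\ge\int_{J_u}\frac{d\mu}{d(\mathcal H^{N-1}\lfloor J_u)}\,d\mathcal H^{N-1}$, so the theorem reduces to the pointwise estimate $\frac{d\mu}{d(\mathcal H^{N-1}\lfloor J_u)}(x_0)\ge f(u^+(x_0),u^-(x_0),\nu_u(x_0))$ for $\mathcal H^{N-1}$-a.e. $x_0\in J_u$.

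Second, I would compute that Radon--Nikodym density by blow-up. For $\mathcal H^{N-1}$-a.e. $x_0\in J_u$ one has, with $Q_\rho^\nu(x_0)$ the cube of side $\rho$ centred at $x_0$ and oriented along $\nu:=\nu_u(x_0)$, the rectifiability identity $\mathcal H^{N-1}(J_u\cap Q_\rho^\nu(x_0))/\rho^{N-1}\to1$, together with the blow-up convergence $u(x_0+\rho\,\cdot)\to u_{i,j,\nu}$ in $L^1_{loc}$ where $i=u^+(x_0)$, $j=u^-(x_0)$. Choosing radii $\rho_k\downarrow0$ with $\mu(\partial Q_{\rho_k}^\nu(x_0))=0$ expresses the density as $\lim_k\rho_k^{-(N-1)}\lim_h\mu_h(Q_{\rho_k}^\nu(x_0))$. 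The decisive use of equiintegrability enters here: since $\{|\nabla u_h|\}$ is equiintegrable, every weak${}^*$ limit of $|\nabla u_h|\,\mathcal L^N$ is absolutely continuous, hence has vanishing $(N-1)$-dimensional density on the $\mathcal L^N$-null set $J_u$; consequently $\rho^{-(N-1)}\int_{Q_\rho^\nu(x_0)}|\nabla u_h|\,dx$ can be forced to $o(1)$ along a suitable diagonal. Rescaling $v_k(y):=u_{h_k}(x_0+\rho_k y)$ on the unit cube $Q^\nu$, a change of variables gives $\rho_k^{-(N-1)}\mu_{h_k}(Q_{\rho_k}^\nu(x_0))=\int_{J_{v_k}\cap Q^\nu}f(v_k^+,v_k^-,\nu_{v_k})\,d\mathcal H^{N-1}$, and the diagonal sequence satisfies $v_k\to u_{i,j,\nu}$ in $L^1(Q^\nu)$, $\int_{Q^\nu}|\nabla v_k|\to0$, with $\|v_k\|_\infty$ and $\mathcal H^{N-1}(J_{v_k})$ bounded.

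Finally, it remains to prove $\liminf_k\int_{J_{v_k}\cap Q^\nu}f(v_k^+,v_k^-,\nu_{v_k})\,d\mathcal H^{N-1}\ge f(i,j,\nu)$, which is exactly where $BV$-ellipticity is activated. I would first adjust each $v_k$ near $\partial Q^\nu$ so that it coincides with $u_{i,j,\nu}$ there, through a fundamental-estimate/cut-off interpolation between $v_k$ and $u_{i,j,\nu}$ on a thin boundary layer; the extra surface energy created is controlled by the boundedness of $\mathcal H^{N-1}(J_{v_k})$ and the $L^1$-smallness of $v_k-u_{i,j,\nu}$ on the layer, and is absorbed into the $\liminf$. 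Then, using the continuity of $f$ and the $L^\infty$ bound, I would discretize the range on a fine finite net $T\subset\mathbb R^m$ and, exploiting $\int_{Q^\nu}|\nabla v_k|\to0$, replace $v_k$ by a piecewise constant competitor $w_k:Q^\nu\to T$ with $\{w_k\neq u_{i,j,\nu}\}\subset\subset Q^\nu$, whose surface energy differs from that of $v_k$ by an amount tending to $0$. For such admissible $w_k$ the definition of $BV$-ellipticity gives $\int_{J_{w_k}}f(w_k^+,w_k^-,\nu_{w_k})\,d\mathcal H^{N-1}\ge f(i,j,\nu)$, and refining the net while letting $k\to\infty$ yields the required bound. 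I expect this last reduction --- converting an $SBV$ map with vanishing absolutely continuous gradient into a piecewise constant competitor with controlled surface energy, together with the boundary-layer matching --- to be the main obstacle, precisely because it is the step where $BV$-ellipticity must be exploited by hand rather than through the soft duality available under the stronger joint convexity.
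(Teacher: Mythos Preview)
The paper does not give its own proof of this statement: it is quoted verbatim from \cite{A2} (Theorem~3.3 there, cf.\ also \cite[Theorem~5.14]{AFP}) as a black box to be applied, in dimension one, inside the proof of Lemma~\ref{lscalongdir}. So there is no ``paper's proof'' to compare against beyond the cited reference.

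That said, your blow-up scheme is exactly the strategy carried out in \cite{A2} and \cite{AFP}: pass to a subsequence realizing the $\liminf$, localize via the Radon--Nikod\'ym derivative of the weak${}^*$ limit of the surface measures with respect to $\mathcal H^{N-1}\lfloor J_u$, rescale around a generic jump point, and reduce to competitors against the pure jump $u_{i,j,\nu}$ on the unit cube. You also correctly identify the one genuinely delicate passage, namely converting the rescaled $SBV$ maps $v_k$ (with $\int_{Q^\nu}|\nabla v_k|\to 0$) into piecewise constant maps with values in a finite set so that the defining inequality of $BV$-ellipticity applies. Your description of that step (``discretize the range on a fine finite net'' and ``cut-off interpolation'' near $\partial Q^\nu$) is, as you say, only a sketch: in \cite{A2,AFP} this reduction is not done by a naive range projection but through a coarea-type truncation and a careful estimate showing that the extra jump created is $o(1)$, and the boundary matching for surface energies is handled by a Fubini/averaging choice of the cut rather than by a bulk-type fundamental estimate. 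So the plan is sound and aligned with the literature; the place where real work remains is precisely the one you flagged.
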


Assuming $f$ jointly convex, one can allow $f$ to take the value $+\infty$ and not necessarily be continous, as it has been proven in \cite[Theorem 3.6]{A2}, (see also \cite[Theorem 5.22]{AFP}.)

\begin{Theorem}\label{lscjcAFP}
Let $K' \subset \mathbb R^m$ be a compact set and let $f:\mathbb R^m \times \mathbb R^m \times S^{N-1}\to [0, +\infty]$ be a jointly convex function. Let $\{u_h\} \subset SBV(\Omega; \mathbb R^m)$ be a sequence converging in $L^1(\Omega;\mathbb R^m)$ to $u$ such that $u_h \in K'$ a.e. in $\O$, $\{|\nabla u_h|\}$ is equiintegrable and ${\cal H}^{N-1}(J_{u_h})$ bounded. Then $u \in SBV(\Omega;\mathbb R^d)$, $u \in K'$ a.e. in $\O$ and, 
$$
\displaystyle{\int_{J_u} f(u^+, u^-, \nu_u)d {\cal H}^{N-1} \leq \liminf_h \int_{J_{u_h}} f(u^+_h, u^-_h, \nu_h) d {\cal H}^{N-1}.}
$$
\end{Theorem}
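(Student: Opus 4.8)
The plan is to reduce the general jointly convex integrand $f$ — which may take the value $+\infty$ and need only be lower semicontinuous — to the continuous, finite-valued $BV$-elliptic case already settled in Theorem \ref{lscbvelliptA2}, by approximating $f$ from below by an increasing sequence of continuous $BV$-elliptic functions extracted from the very representation that defines joint convexity. All the analytic work sits in the known result; the new content is the monotone approximation and the passage to the limit.

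First I would record the structural conclusions. Since $u_h \to u$ in $L^1(\Omega;\mathbb{R}^m)$, a subsequence converges a.e., and as $K'$ is closed this forces $u \in K'$ a.e. in $\Omega$. For the construction, by Definition \ref{jointconvexity} there is a sequence $\{g_h\}\subset[C_0(\mathbb{R}^m)]^N$ with $f(i,j,p)=\sup_{h\in\mathbb{N}}\{(g_h(i)-g_h(j))\cdot p\}$. Setting $g_0\equiv 0$ and defining, for $p\in S^{N-1}$,
$$
f_n(i,j,p):=\max_{0\le h\le n}\{(g_h(i)-g_h(j))\cdot p\},
$$
I obtain functions $f_n:\mathbb{R}^m\times\mathbb{R}^m\times S^{N-1}\to[0,+\infty[$ that are continuous (finite maxima of continuous functions, finite-valued because each $g_h$ is bounded), nonnegative (the inclusion of $g_0\equiv 0$ together with $f\ge 0$ forces $f_n\ge 0$), and jointly convex — hence $BV$-elliptic by the implication recalled in the excerpt. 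Moreover $f_n\le f_{n+1}$ and $f_n\nearrow f$ pointwise, the added zero term being harmless precisely because $f\ge 0$. Applying Theorem \ref{lscbvelliptA2} to $f_1$ (its hypotheses hold here: $\|u_h\|_{L^\infty}$ is bounded since $u_h\in K'$ compact, $\{|\nabla u_h|\}$ is equiintegrable, and $\mathcal{H}^{N-1}(J_{u_h})$ is bounded) yields $u\in SBV(\Omega;\mathbb{R}^m)$.

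Next I would apply Theorem \ref{lscbvelliptA2} to each $f_n$ and bound the right-hand side using $f_n\le f$:
$$
\int_{J_u} f_n(u^+,u^-,\nu_u)\,d\mathcal{H}^{N-1}\le\liminf_h\int_{J_{u_h}} f_n(u_h^+,u_h^-,\nu_h)\,d\mathcal{H}^{N-1}\le\liminf_h\int_{J_{u_h}} f(u_h^+,u_h^-,\nu_h)\,d\mathcal{H}^{N-1}.
$$
Since $\mathcal{H}^{N-1}(J_u)<+\infty$ the measure $\mathcal{H}^{N-1}\lfloor J_u$ is finite, so letting $n\to\infty$ and applying the monotone convergence theorem to $f_n(u^+,u^-,\nu_u)\nearrow f(u^+,u^-,\nu_u)$ on the left transfers the bound to $f$ itself, giving the claimed inequality. (Equivalently, Lemma \ref{lemma2.35AFP} governs the passage to the supremum over $n$.)

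I expect the main obstacle to be exactly the verification that the approximants $f_n$ retain $BV$-ellipticity while staying continuous and finite, yet still increase to the discontinuous, $[0,+\infty]$-valued $f$. This is where the rigidity of the joint-convexity representation is indispensable: an arbitrary lower semicontinuous $BV$-elliptic function need not admit any monotone approximation from below by continuous $BV$-elliptic functions, and it is only the specific form $\sup_h\{(g_h(i)-g_h(j))\cdot p\}$ with $g_h\in[C_0(\mathbb{R}^m)]^N$ that makes the truncations $f_n$ simultaneously continuous, finite, nonnegative, and jointly convex. Everything else — the a.e. constraint $u\in K'$, the $SBV$ membership, and the final monotone passage to the limit — is routine once this approximation is in place.
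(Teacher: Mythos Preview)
Your proposal is correct. The paper does not supply its own proof of Theorem~\ref{lscjcAFP}; it is quoted as a known result from \cite[Theorem~3.6]{A2} (see also \cite[Theorem~5.22]{AFP}). That said, the monotone approximation scheme you use---truncating the joint-convexity representation to $f_n(i,j,p)=\max_{h\le n}\{(g_h(i)-g_h(j))\cdot p\}$, applying the continuous $BV$-elliptic case (Theorem~\ref{lscbvelliptA2}) to each $f_n$, and passing to the limit via monotone convergence---is exactly the argument the paper itself deploys later in the proof of Proposition~\ref{risultatoconjoint}, so your approach is fully in line with the paper's methodology.
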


We observe that the assumption $\inf f>0 $ is only needed in the proof of \cite[Theorem 3.3]{A2}, for (3.20) therein, which is actually a consequence of the hypotheses of the present Theorem \ref{lscbvelliptA2}. Similar considerations apply to our Theorem \ref{lscjcAFP}.
Actually, from the mechanical viewpoint, boundedness of the third term in (\ref{1.3modrem2.3BCDM}) above may be interpreted as a ban to fractures to fill the material.

Moreover we emphasize that, to our purposes, i.e. for Theorem \ref{lscjcAFP} we could replace $f$ defined on $\mathbb R^m\times \mathbb R^m \times S^{N-1}$ by a function defined just on the compact set $K' \times K' \times S^{N-1}$. On  the other hand it would be enough to require such a density jointly convex just on $K' \times K' \times S^{N-1}$ obtaining it through functions $\{g_h\} \subset (C(K'))^N$ in place of $\{g_h\} \subset (C_0(\mathbb R^m))^N$  when giving Definition \ref{jointconvexity}, since the sequence $\{u_h\}$ in Theorem \ref{lscjcAFP} has range in $K'$. In fact this latter approach has been followed in \cite{AFP}, but the present choice allows a more transparent comparison of the lower semicontinuity results Theorem \ref{mainthm} and Proposition \ref{risultatoconjoint} with the results contained in \cite{GZ1} and \cite{GZ2}, see Theorems \ref{mainthmv} herein and \cite[Theorem 1.2]{GZ2}.

\section{Theorem \ref{mainthm} and Applications}\label{secmainthm}

We start this section by providing a lower semicontinuity lemma  along directions that will be to a great degree exploited in the proof of Theorem  \ref{mainthm}. The proof develops in analogy with a similar result in \cite{GZ2}, essentially exploiting the slicing method for $SBD$ fields introduced in \cite{ACDM, BCDM}, and we write it here for reader's convenience.

\begin{Lemma}\label{lscalongdir}
Let $f$ be a continuous $BV$-elliptic function as in Definition \ref{BV-ellipticity}. Let $\Omega$ be a bounded open subset of $\mathbb R^N$. Let $\{u_h\}$ be a sequence in $SBD(\Omega)$ satisfying the bound (\ref{1.3modrem2.3BCDM}), such that $[u_h] \cdot \nu_{u_h} \geq 0$  ${\cal H}^{N-1}$-a.e. on $J_{u_h}$ for every $h$ and converging to $u$ in $L^1(\Omega;\mathbb R^N)$ .
Then
\begin{equation}\label{eqscilemma}
\begin{array}{ll}
\displaystyle{\int_{J_u} f\left(u^+(y)\cdot \xi, u^-(y)\cdot \xi,\nu_u \cdot \xi\right)d {\cal H}^{N-1}(y)\leq}\\
\displaystyle{ \liminf_h \int_{J_{u_h}} f\left(u_h^+(y)\cdot \xi, u_h^-(y)\cdot \xi,  \nu_{u_h}\cdot \xi \right)d {\cal H}^{N-1}(y)}
\end{array}
\end{equation}
for ${\cal H}^{N-1}$-a.e. $\xi \in S^{N-1}$.
\end{Lemma}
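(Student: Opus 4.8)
The plan is to reduce the $N$-dimensional statement to a one-dimensional lower semicontinuity result for $SBV$ functions on lines, via the slicing theory for $SBD$ fields from Theorem~\ref{Structure} and Proposition~\ref{prop4.7}, and then invoke Theorem~\ref{lscbvelliptA2} on almost every slice. First I would fix a direction $\xi \in S^{N-1}$ for which the good slicing properties hold: namely, $\xi$ such that \eqref{Junegligible} holds, and such that (by Proposition~\ref{prop4.7}(ii) applied along $\xi$ and nearby coordinate directions) the sliced functions $(u_h)^\xi_y$ and $u^\xi_y$ lie in $SBV(\Omega^\xi_y)$ for $\mathcal H^{N-1}$-a.e.\ $y \in \Omega^\xi$. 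Since the bad set of directions is $\mathcal H^{N-1}$-negligible for each of the countably many conditions involved, the conclusion \eqref{eqscilemma} will hold for $\mathcal H^{N-1}$-a.e.\ $\xi$, which is all that is claimed. Orient normals so that $\nu_{u_h}\cdot \xi \ge 0$ and $\nu_{u^\xi_y} = 1$ as in Theorem~\ref{Structure}(iv).

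Next I would rewrite both sides of \eqref{eqscilemma} as integrals over $\Omega^\xi$ of one-dimensional jump integrals. By the change of variables formula \eqref{changeofvariables} (with the Borel function $g$ built from $f$ evaluated at the relevant traces), together with Theorem~\ref{Structure}(iv) identifying $(u_h)^+(y+t\xi)\cdot\xi$ with $((u_h)^\xi_y)^+(t)$ and likewise for the minus traces, the right-hand side of \eqref{eqscilemma} becomes
\begin{equation}\label{planRHS}
\int_{\Omega^\xi}\left(\int_{J_{(u_h)^\xi_y}} f\bigl(((u_h)^\xi_y)^+(t),\,((u_h)^\xi_y)^-(t),\,1\bigr)\,d\mathcal H^0(t)\right) d\mathcal H^{N-1}(y),
\end{equation}
and similarly for $u$ on the left. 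Here I use that $\nu_{u_h}\cdot\xi$ is absorbed into the change of variables \eqref{changeofvariables} (the factor $|\nu_u\cdot\xi|$), which is exactly why the third argument of $f$ on the sliced side is $1$; the hypothesis $[u_h]\cdot\nu_{u_h}\ge 0$ guarantees, via Theorem~\ref{Structure}(iv), that the sliced jumps $((u_h)^\xi_y)^+ - ((u_h)^\xi_y)^-$ have a consistent sign, so that everything is compatible with the orientation convention and with the permutation symmetry $f(i,j,p)=f(j,i,-p)$.

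Then I would apply Fatou's lemma in $y \in \Omega^\xi$ to pass the $\liminf_h$ inside the outer integral, reducing matters to showing, for $\mathcal H^{N-1}$-a.e.\ $y$, the one-dimensional inequality
\begin{equation}\label{plan1D}
\int_{J_{u^\xi_y}} f\bigl((u^\xi_y)^+,(u^\xi_y)^-,1\bigr)\,d\mathcal H^0 \le \liminf_h \int_{J_{(u_h)^\xi_y}} f\bigl(((u_h)^\xi_y)^+,((u_h)^\xi_y)^-,1\bigr)\,d\mathcal H^0.
\end{equation}
This is precisely the content of Theorem~\ref{lscbvelliptA2} in dimension $N=1$ (with $m=1$), provided its hypotheses hold on a.e.\ slice: $(u_h)^\xi_y \to u^\xi_y$ in $L^1(\Omega^\xi_y)$, the $L^\infty$-norms are bounded (immediate from $\|u_h\|_{L^\infty}\le K$ in \eqref{1.3modrem2.3BCDM}), the numbers $\mathcal H^0(J_{(u_h)^\xi_y})$ are bounded, and $\{|\nabla (u_h)^\xi_y|\} = \{|\mathcal E u_h(y+t\xi)\xi\cdot\xi|\}$ is equiintegrable. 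The subtle point — and the main obstacle — is that the per-slice hypotheses hold only \emph{on average}: the bound \eqref{1.3modrem2.3BCDM} controls $\int_{\Omega^\xi}\mathcal H^0(J_{(u_h)^\xi_y})\,d\mathcal H^{N-1}(y)$ (via Theorem~\ref{thm4.10} / Definition~\ref{Def4.9}) and, through the superlinearity \eqref{1.2BCDM} and the de~la~Vall\'ee-Poussin criterion, gives equiintegrability of $\{|\mathcal E u_h|\}$ in $L^1(\Omega)$, hence of $\{|\nabla (u_h)^\xi_y|\}$ after integrating in $y$ — but not uniformly in $y$, and a priori not along the \emph{same} subsequence for every $y$. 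The standard remedy, which I would carry out following the scheme in \cite{GZ2, BCDM, ACDM}, is a diagonal/subsequence argument: pass to a subsequence (not relabelled) realizing the $\liminf$ on the left of \eqref{eqscilemma} as a genuine limit; extract a further subsequence along which $(u_h)^\xi_y \to u^\xi_y$ in $L^1(\Omega^\xi_y)$ for $\mathcal H^{N-1}$-a.e.\ $y$ (possible since $u_h\to u$ in $L^1(\Omega;\mathbb R^N)$ forces, by Fubini, $L^1$-convergence of slices along a subsequence for a.e.\ $y$); and handle the equiintegrability and the jump-number bound by noting that the functions $y \mapsto \int \gamma(|\nabla(u_h)^\xi_y|)\,dt$ and $y\mapsto \mathcal H^0(J_{(u_h)^\xi_y})$ have uniformly bounded integrals over $\Omega^\xi$, so by another application of Fatou (or by extracting once more) the relevant quantities are finite for a.e.\ $y$ along the subsequence, and $\gamma$-superlinearity upgrades the pointwise $L^1$-bound on $\nabla(u_h)^\xi_y$ to equiintegrability on that slice. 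Once \eqref{plan1D} is secured for a.e.\ $y$ along this subsequence, Fatou in $y$ and the reductions above yield \eqref{eqscilemma} for the chosen $\xi$, and since the set of admissible $\xi$ is of full $\mathcal H^{N-1}$-measure in $S^{N-1}$, the proof is complete.
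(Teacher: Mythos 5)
Your proposal follows essentially the same route as the paper's proof: slicing via Theorem \ref{Structure} and Proposition \ref{prop4.7}, absorbing $|\nu_u\cdot\xi|$ through the change of variables (\ref{changeofvariables}) after excising the ${\cal H}^{N-1}$-null set of bad directions via (\ref{Junegligible}), a chain of subsequence extractions to secure the per-slice hypotheses (equiintegrability from $\gamma$-superlinearity, a.e.-slice $L^1$ convergence, finiteness of the slice jump counts) for Theorem \ref{lscbvelliptA2} in dimension one, and a final Fatou in $y$. You have correctly identified the one genuinely delicate point — that the slice-wise hypotheses hold only on average and the subsequences a priori depend on $y$ — and your proposed remedy is exactly the diagonal argument the paper carries out.
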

\begin{proof}

Let $\{u_h\}\subset SBD(\O)$ satisfying the bound (\ref{1.3modrem2.3BCDM}) and converging to $u$ in $L^1(\Omega; \mathbb R^N)$. Theorem \ref{compactnessBCDM} ensures that $u \in SBD(\O)$. 

Let $\xi \in S^{N-1}$, and let $p_\xi: J_u \to \pi_\xi$ be the orthogonal projection onto $\pi_\xi$.
First we observe that $(iv)$ in Theorem \ref{Structure} guarantees that one can choose the normals to $J_u$, $J_{u_h}$, $J_{u^\xi_y}$ and $J_{{u_h}^{\xi}_y}$ oriented so that $\nu_u \cdot \xi , \nu_{u_h}\cdot \xi\geq 0$ and $\nu_{u^\xi_y}= \nu_{{u_h}^{\xi_y}}=1$.

This fact and Proposition \ref{prop4.7} ensure that for ${\cal H}^{N-1}$- a.e. $y \in \Omega^\xi$   it results 
\begin{equation}\label{FLU}
\begin{array}{ll}
(u^\xi_y)^+(t)=(u\cdot \xi)^+(y + t \xi) \hbox{ and }(u^\xi_y)^-(t)=(u\cdot \xi)^-(y + t \xi) \hbox{ for every } t \in J_{u^\xi_y} \hbox{ and,} \\
({u_h}^\xi_y)^+(t)=(u_h\cdot \xi)^+(y + t \xi) \hbox{ and} ({u_h}^\xi_y)^-(t)=(u_h\cdot \xi)^-(y + t \xi) \hbox{ for every }t \in J_{{u_h}^\xi_y},
\end{array}
\end{equation} with $u^\xi_y, {u_h}_y^\xi \in SBV(\O^\xi_y)$ for ${\cal H}^{N-1}$-a.e. $y \in \O^\xi$.  

Thus we can restate (\ref{eqscilemma}) as
\begin{equation}\label{eqscilemma2}
\begin{array}{ll}
\displaystyle{\int_{J_u} |\nu_u \cdot \xi|f\left(u^+(y)\cdot \xi, u^-(y)\cdot \xi,1\right)d {\cal H}^{N-1}(y)\leq}\\
\displaystyle{ \liminf_h \int_{J_{u_h}} |\nu_{u_h}\cdot \xi|f\left(u_h^+(y)\cdot \xi, u_h^-(y)\cdot \xi,1 \right)d {\cal H}^{N-1}(y)}
\end{array}
\end{equation}

On the other hand, by  (\ref{1.6bis}) and (\ref{Junegligible}), 
we have
\begin{equation}\label{integralpsi(0)}
\begin{array}{ll}
\displaystyle{\int_{J_u}|\xi \cdot \nu_u| f(u^+(y)\cdot \xi, u^-(y \cdot \xi),1)d {\cal H}^{N-1}(y)= \int_{J^\xi_u}|\xi \cdot \nu_u| f(u^+(y)\cdot \xi, u^-(y)\cdot \xi,1)d {\cal H}^{N-1}(y),}\\
\displaystyle{\int_{J_{u_h}}|\xi \cdot \nu_{u_h}| f(u_h^+(y)\cdot \xi, u_h^-(y)\cdot \xi,1)d {\cal H}^{N-1}(y)=\int_{J^\xi_{u_h}}|\xi \cdot \nu_{u_h}| f(u^+_h(y) \cdot \xi, u_h^-(y)\cdot \xi,1)d {\cal H}^{N-1}(y)}
\end{array}
\end{equation} 
for every $h \in \mathbb N$ and for ${\cal H}^{N-1}$-a.e. $\xi \in S^{N-1}$.
(\ref{FLU}), (\ref{integralpsi(0)}), (\ref{changeofvariables}) guarantee the existence of $N \subset S^{N-1}$ such that ${\cal H}^{N-1}(N)=0$ and  
$$
\int_{J_u}|\xi \cdot \nu_u| f(u^+(y)\cdot \xi,u^-(y)\cdot \xi,1)d {\cal H}^{N-1}(y)=   \int_{\O^\xi}\Big[\int_{J_{u^\xi_y}}f((u^\xi_y)^+(t),(u^\xi_t)^- (t),1)d {\cal H}^0(t)\Big]d {\cal H}^{N-1}(y),
$$

$$
\int_{J_{u_h}}|\xi \cdot \nu_{u_h}| f(u_h^+(y)\cdot \xi, u_h^-(y )\cdot \xi,1)d {\cal H}^{N-1}(y)=   \int_{\O^\xi}\Big[\int_{J_{{u_h}^\xi_y}}f(({u_h}^\xi_y)^+(t),({u_h}^\xi_y)^-(t),1)d {\cal H}^0(t)\Big]d {\cal H}^{N-1}(y),
$$
for every $h \in \mathbb N$ and for every $\xi \in S^{N-1} \setminus N$.

Consequently the proof will be completed once we show that
\begin{equation}\label{equivalenteqscilemma}
\begin{array}{ll}
\displaystyle{\int_{\O^\xi}\Big[\int_{J_{u^\xi_y}}f((u^\xi_y)^+(t),(u^\xi_t)^-(t),1)d {\cal H}^0(t)\Big]d {\cal H}^{N-1}(y)\leq }\\  \displaystyle{\liminf_{h \to +\infty}\int_{\O^\xi}\Big[\int_{J_{{u_h}^\xi_y}}f(({u_h}^\xi_y)^+(t),({u_h}^\xi_y)^-(t),1)d {\cal H}^0(t)\Big]d {\cal H}^{N-1}(y)}
\end{array}
\end{equation}
for every $\xi \in S^{N-1}\setminus N$.

To this end, for each $\xi \in S^{N-1}\setminus N$ consider a subsequence $\{u_k\}\equiv\{u_{h_k}\}$ such that  
\begin{equation}\label{subseqliminf}
\liminf_{h \to + \infty}\int_{J_{{u_h}^\xi_y}}f(({u_h}^\xi_y)^+(t),({u_h}^\xi_y)^-(t),1)d{\cal H}^0(t)=\lim_{k \to +\infty}\int_{J_{{u_k}^\xi_y}}f(({u_k}^\xi_y)^+(t),({u_k}^\xi_y)^-(t),1)d{\cal H}^0(t).
\end{equation}
Next consider a further subsequence (denoted by $\{u_j\}\equiv \{u_{k_j}\}$) such that
\begin{equation}
\lim_{j \to + \infty}{\cal H}^{N-1}(J_{u_j})=\liminf_{k \to + \infty}{\cal H}^{N-1}(J_{u_{k}}).
\end{equation}

\noindent
We want to show that the assumptions of Theorem \ref{lscbvelliptA2} in dimension one are satisfied.

By (ii) in Theorem \ref{Structure} (i.e. ${\cal E}u_j(y +t \xi)\cdot \xi = ({u_j}^\xi)'_y(t)$ for ${\cal H}^{N-1}$-a.e. $y \in \O^\xi$ and for ${\cal L}^1$-a.e. $t \in \O^\xi_y$) and by Fubini-Tonelli's theorem, for any $\xi \in S^{N-1}\setminus N$ we can define 
$I_{y,\xi}(u_j)=\int_{\O^\xi_y}\gamma(|{u'_j}^\xi_y(t)|)dt$, where ${u_j}^\xi_y(t)= u_j(y+t \xi)\cdot \xi$
and we have
$$
\int_{\pi_\xi}I_{y,\xi}(u_j)d {\cal H}^{N-1}(y)= \int_\O \gamma(|{\cal E}u_j(x)\xi \cdot \xi|)dx.
$$
Since $\{u_j\}$ satisfies the bound (\ref{1.3modrem2.3BCDM}) and $\gamma$ is non-decreasing, it follows that
\begin{equation}\label{1hyplemma2.1}
\int_{\pi_\xi}I_{y,\xi}(u_j)d{\cal H}^{N-1}(y)\leq \int_\O \gamma(|{\cal E}u_j(x)|)dx \leq K,
\end{equation}
for every $\xi \in S^{N-1}\setminus N$ and for ${\cal H}^{N-1}$-a.e. $y \in \O^\xi$.
It is also easily seen that, from the bound on $\|u_j\|_{L^\infty}$, deriving from the global bound (\ref{1.3modrem2.3BCDM}),
\begin{equation}\label{2hyplemma2.1}
\|{u_j}^\xi_y\|_{L^\infty(\O^\xi_y)}\leq K.
\end{equation}
From (\ref{1hyplemma2.1}), (\ref{changeofvariables}) and (\ref{1.3modrem2.3BCDM}) for every  $\xi \in S^{N-1}\setminus N$ it  results that there exists a constant $C \equiv C(K)$ such that
\par
$$
\liminf_{j \to +\infty}\int_{\pi_\xi}[I_{y,\xi}(u_j) + {\cal H}^0(J_{{u_j}^\xi_y})]d {\cal H}^{N-1}(y) \leq C< +\infty.
$$                                                                                                       Let us fix $\xi \in S^{N-1} \setminus N$  (such that the previous inequality holds).
Using Fubini-Tonelli's theorem and convergence in measure for $L^1$- converging sequences, we can extract a subsequence $\{u_m\}= \{u_{j_m}\}$ (depending on $\xi$) such that
 \begin{equation}\label{2.9bcdm}
 \begin{array}{ll}
\displaystyle{ \lim_{m \to + \infty} \int_{\pi_\xi}[ I_{y,\xi}(u_m)+ {\cal H}^0(J_{{u_m}^\xi_y})]d {\cal H}^{N-1}(y)=}\\
 \displaystyle{\liminf_{j \to + \infty}\int_{\pi_\xi}[ I_{y,\xi}(u_j)+ {\cal H}^0(J_{{u_j}^\xi_y})]d {\cal H}^{N-1}(y) \leq C < + \infty,}
 \end{array}
 \end{equation}
 and for a.e. $y \in \O^\xi$, $u_{m, y}^ \xi\in SBV(\O^\xi_y)$ and ${u_m}^\xi_y \to u_y^\xi$ in $L^1_{loc}(\O^\xi_y)$, with $u^\xi_y \in SBV(\O^\xi_y)$.

Let $\xi \in S^{N-1}\setminus N$: by (\ref{2.9bcdm}) and Fatou's lemma, for ${\cal H}^{N-1}$-a.e. $y \in \O^\xi$, it results
\begin{equation}\label{2.11bcdm}
\displaystyle{\liminf_{m \to + \infty}[I_{y,\xi}(u_m)+ {\cal H}^0(J_{{u_m}^\xi_y}) ]< +\infty.}
\end{equation}

Let us fix $N_{\O^\xi} \subset \O^\xi$ and a point $ y \in \Omega^\xi \setminus N_{\Omega^\xi}$,  such that ${\cal H}^{N-1}(N_{\Omega^\xi})=0$, (\ref{2.11bcdm}), (\ref{2hyplemma2.1}) hold and such that ${u_m}^\xi_y \in SBV(\O^\xi_y)$ for any $m$. Passing to a further subsequence $\{u_l\}\equiv \{u_{m_l}\}$ we can assume that there exists a constant $C'$ such that
$$
\displaystyle{\liminf_{m \to + \infty}[I_{y,\xi}(u_m)+ {\cal H}^0(J_{{u_m}^\xi_y}) ]= \lim_{l \to + \infty}[I_{y,\xi}(u_l)+ {\cal H}^0(J_{{u_l}^\xi_y}) ] \leq C'}.
$$ 

This means that $\{{u_l}^\xi_y\} \in SBV(\O_y^\xi)$ and satisfies all the assumptions of Theorem \ref{lscbvelliptA2} for each interval (connected component) $I \subset \O^\xi_y$. 
Consequently (\ref{subseqliminf}), $(iv)$ of Theorem \ref{Structure} and, Theorem \ref{lscbvelliptA2}  guarantee that
\begin{equation}\label{1dlsc}
\begin{array}{ll}
\displaystyle{\int_{J_{u^\xi_y}}f((u^\xi_y)^+(t),(u^\xi_y)^-(t),1)d {\cal H}^0(t)\leq \lim_{l \to + \infty}\int_{J_{{u_l}^\xi_y}}f(({u_l}^\xi_y)^+(t),({u_l}^\xi_y)^-(t),1)d {\cal H}^0(t)=}\\
\\
\displaystyle{\liminf_{h \to +\infty}\int_{J_{{u_h}^\xi_y}}f(({u_h}^\xi_y)^+(t),({u_h}^\xi_y)^-(t),1)d {\cal H}^0(t)}
\end{array}
\end{equation} 
for ${\cal H}^{N-1}$-a.e. $\xi \in S^{N-1}$ and for ${\cal H}^{N-1}$-a.e. $y \in \O^\xi$.

The lower semicontinuity stated in (\ref{equivalenteqscilemma}) now follows from Fatou's lemma, which completes the proof.

\end{proof}

\noindent
Now we are in position to prove Theorem \ref{mainthm}.

\begin{proof}[Proof of Theorem \ref{mainthm}]
We preliminarly observe that (\ref{interpenetration}) follows by Theorem \ref{mainthmv}, thus it only remains to prove (\ref{maineq}) and this will be achieved essentially through the applications of Lemma \ref{lscalongdir} and Lemma \ref{lemma2.35AFP}.

The continuity of $f$ allows us to assume $\xi$ in (\ref{Theta}) varying in any countable subset of $S^{N-1}$. It will be chosen in $S^{N-1} \setminus N$, $N$ being the ${\cal H}^{N-1}$ exceptional set introduced in Lemma \ref{lscalongdir}, and it will be denoted by ${\cal A}$, with elements $\xi_\alpha$. 

By superadditivity of liminf: 

$$\displaystyle{\liminf_{h \to + \infty} \int_{J_{u_h}}\Theta (u_h^+, u_h^-, \nu_{u_h})d \H^{N-1} \geq \sum_\alpha \liminf_{h \to +\infty}  \int_{J_{u_h}\cap A_{\alpha}}f(\xi_\alpha \cdot u_h^+,\xi_\alpha \cdot u_h^-, \xi \cdot \nu_{u_h})d \H^{N-1}}$$ for any finite family of pairwise disjoint open sets $A_\alpha \subset \Omega$. 

By Lemma \ref{lscalongdir} we have
$$
\displaystyle{\liminf_{h \to + \infty} \int_{J_{u_h}}f(\xi_\alpha \cdot u_h^+,\xi_\alpha \cdot u_h^-, \nu_{u_h})d \H^{N-1} \geq \int_{J_u} f(\xi_\alpha \cdot u^+,\xi_\alpha \cdot u_h^-, \nu_{u_h} d \H^{N-1} }
$$
for every $\xi_\alpha \in {\cal A}$.
Therefore
$$
\displaystyle{\liminf_{h \to + \infty} \int_{J_{u_h}}\Theta (u_h^+, u_h^-, \nu_{u_h})d \H^{N-1} \geq \sum_{\alpha} \int_{J_u \cap A_\alpha} f(\xi_\alpha \cdot u^+, \xi_\alpha \cdot u^-, \xi_\alpha \cdot \nu_{u}) d \H^{N-1} }
$$
for every $\xi_\alpha \in {\cal A}$ and for any finite family of pairwise disjoint open sets $A_\alpha \subset \Omega$. 

By Lemma \ref{lemma2.35AFP} we can interchange integration and supremum over all such families, thus getting
$$
\displaystyle{\liminf_{h \to + \infty}\int_{J_{u_h}}\Theta(u_h^+, u_h^-, \nu_{u_h})d \H^{N-1} \geq \int_{J_u}\Theta(u^+,u^-, \nu_u )d \H^{N-1}},
$$
whence (\ref{maineq}) follows and this concludes the proof.

\end{proof}

\begin{Remark}\label{remarkclassinGZ2}

It is worthwhile to observe that $\phi$ in (\ref{functionAB}) of \cite{ABF2, ABF3, AB} can be recast in terms of a suitable $\Theta$ in (\ref{Thetadef}) requiring in the model that the noninterpenetration constraint (\ref{interpenetration}) is verified. In fact it suffices to consider (as already observed in \cite{GZ2}) 
$$
\displaystyle{\Theta(i,j,p)= \sup_{\xi \in S^{N-1}} K |p \cdot \xi|},
$$
i.e. $f(a_1,a_2, b)= \psi(|a_1-a_2|)\theta(b)$ for suitable $\psi$ and $\theta$ (see 2 of Examples \ref{EXAIMETA} below), with  $\psi= \psi_{\rm const}: t \in [0, +\infty[ \to K, K>0$, and $\theta= |\cdot|$, from which one deduces that $\Theta= \Theta_{\rm const}: (i,j,p)\in \mathbb R^N \times \mathbb R^N\times S^{N-1}\to K$.  

Moreover we recall as emphasized in \cite[Remark 4.8]{GZ2} that the constant functions $K$ represent the only intersections between the classes $\Psi$ in (\ref{formulaPsi}) and $\Phi$ in (\ref{famiglia}). 
On the other hand, the fact that the classes (\ref{famiglia}) and (\ref{Thetadef}) do differ is not very surprising and, indeed, also the techniques adopted to prove the related lower semicontinuity results Theorem \ref{mainthmv} and Theorem \ref{mainthm} (and its simplified version given in \cite[Theorem 1.2]{GZ2}) are very different, the first relying essentially on Geometric Measure Theory and the second on the structure of the Special fields with Bounded Deformation together with the characterization of lower semicontinuity in $SBV$, enlightened in \cite{ACDM, BCDM} and in \cite{A2}.

\end{Remark}

We observe that, while joint convexity entails $BV$-ellipticity, on the other hand, one can replace the $BV$-elliptic function $f$ in (\ref{Thetadef}) by a jointly convex one, which may take also the value $+\infty$, i.e.
\begin{equation}\label{Thetadef2} 
\displaystyle{\Theta:(i,j,p) \in \mathbb R^N \times \mathbb R^N \times S^{N-1} \to \sup_{\xi \in S^{N-1}}f(i\cdot \xi, j\cdot \xi, p\cdot \xi)}
\end{equation}
where $f:\mathbb R \times \mathbb R \times \mathbb R \to ]0,+\infty]$ is a jointly convex function 
as in Definition \ref{jointconvexity}.


Thus the following result holds, which is indipendently obtained and not stated as a Corollary of Theorem \ref{mainthm}, since we may  avoid to require $f$ continuous and finite.

\begin{Proposition}\label{risultatoconjoint}
Let $\Omega$ be a bounded open subset of $\mathbb R^N$, let $\gamma:[0,+\infty[ \to [0,+\infty[$ be a non-decreasing function verifying condition  (\ref{1.2BCDM}) and let $\Theta$ be as in (\ref{Thetadef2}) where $f$ is a jointly convex function.
Let $\{u_h\}$ be a sequence in $SBD(\Omega)$ satisfying the bound  (\ref{1.3modrem2.3BCDM}), such that $[u_h]\cdot \nu_{u_h}\geq 0$, ${\cal H}^{N-1}$-a.e. on $J_{u_h}$ for every $h$ and converging to $u$ in $L^1(\Omega;\mathbb R^N)$. Then (\ref{interpenetration}) and (\ref{maineq}) hold. 
\end{Proposition}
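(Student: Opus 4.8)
The plan is to mirror the proof of Theorem \ref{mainthm} almost verbatim, replacing the appeal to the $BV$-elliptic lower semicontinuity result Theorem \ref{lscbvelliptA2} with the jointly convex version Theorem \ref{lscjcAFP}, which tolerates an integrand that is not continuous and attains the value $+\infty$. The noninterpenetration conclusion \eqref{interpenetration} again comes for free from Theorem \ref{mainthmv}, so the whole content is the surface lower semicontinuity \eqref{maineq}.

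First I would establish the analogue of Lemma \ref{lscalongdir} for jointly convex $f$: for $\mathcal{H}^{N-1}$-a.e.\ $\xi \in S^{N-1}$ one has, along the slices,
\begin{equation*}
\int_{J_u} f(u^+\cdot \xi, u^-\cdot \xi, \nu_u\cdot \xi)\,d\mathcal{H}^{N-1} \leq \liminf_h \int_{J_{u_h}} f(u_h^+\cdot \xi, u_h^-\cdot \xi, \nu_{u_h}\cdot \xi)\,d\mathcal{H}^{N-1}.
\end{equation*}
The proof is identical to that of Lemma \ref{lscalongdir}: use the Structure Theorem \ref{Structure} (in particular (ii) and (iv)) and Proposition \ref{prop4.7} to reduce, via the change of variables \eqref{changeofvariables} and Fubini--Tonelli, to a one-dimensional statement on $\mathcal{H}^{N-1}$-a.e.\ slice $u_{m,y}^\xi \in SBV(\Omega_y^\xi)$; the bound \eqref{1.3modrem2.3BCDM} together with the superlinearity \eqref{1.2BCDM} of $\gamma$ gives equiintegrability of $\{(u_{m,y}^\xi)'\}$ and boundedness of $\mathcal{H}^0(J_{u_{m,y}^\xi})$ on a.e.\ slice, while $\|u_h\|_{L^\infty} \leq K$ confines the slices in the compact set $K' = [-K,K]$. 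Then, instead of Theorem \ref{lscbvelliptA2}, apply Theorem \ref{lscjcAFP} on each connected component of $\Omega_y^\xi$ with this $K'$, noting that the one-variable map $t \mapsto f(\cdot,\cdot,1)$ obtained by freezing the (positive, $1$-homogeneous) third slot at $1$ is still jointly convex in the sense needed, or alternatively work directly with the $1$-homogeneous extension as in Lemma \ref{lscalongdir}. A final Fatou's lemma over $\Omega^\xi$ upgrades the slice inequality to the stated integral inequality.

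With this directional lemma in hand, the passage to $\Theta$ is exactly the end of the proof of Theorem \ref{mainthm}: choose a countable dense set $\mathcal{A} = \{\xi_\alpha\} \subset S^{N-1}\setminus N$ (legitimate since $\Theta(i,j,p)=\sup_\xi f(i\cdot\xi,j\cdot\xi,p\cdot\xi)$ and, although $f$ need no longer be continuous, jointly convex functions are lower semicontinuous, so the sup over a dense set recovers the sup over $S^{N-1}$ — if one is cautious about the non-continuity, it suffices to note that each $g_h$ in Definition \ref{jointconvexity} is continuous, hence $(g_h(i\cdot\xi)-g_h(j\cdot\xi))\cdot(p\cdot\xi)$ is continuous in $\xi$, so the sup over a countable dense set of the resulting doubly-indexed family already equals $\Theta$). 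Then use superadditivity of $\liminf$ over a finite family of pairwise disjoint open sets $A_\alpha$, apply the directional lemma term by term, and invoke Lemma \ref{lemma2.35AFP} (valid since $\mathcal{H}^{N-1}\lfloor J_u$ is $\sigma$-finite) to interchange the supremum over families with the integral, yielding $\int_{J_u}\Theta(u^+,u^-,\nu_u)\,d\mathcal{H}^{N-1}$ on the right.

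The main obstacle is purely bookkeeping about the value $+\infty$ and the loss of continuity: one must check that freezing the third argument of $f$ at $1$ (after orienting normals so that $\nu\cdot\xi \geq 0$ and using $1$-homogeneity) produces a genuinely jointly convex one-dimensional integrand to which Theorem \ref{lscjcAFP} applies, and that the reduction of $\Theta$ to a countable supremum is still valid without continuity of $f$ — both handled by passing through the generating sequence $\{g_h\}$, which is continuous by definition. Everything else is a transcription of the arguments already given for Lemma \ref{lscalongdir} and Theorem \ref{mainthm}, which is precisely why the authors record it as a standalone proposition rather than a corollary.
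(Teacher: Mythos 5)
Your proof is correct, but at the one point where this proposition genuinely goes beyond Theorem \ref{mainthm} --- the handling of a possibly discontinuous, $[0,+\infty]$-valued $f$ --- you take a different route from the paper. You attack the general $f$ head-on: you re-run Lemma \ref{lscalongdir} on the one-dimensional slices with Theorem \ref{lscjcAFP} in place of Theorem \ref{lscbvelliptA2} (legitimate, since the restriction of $f$ to third argument in $S^0$ is jointly convex with the same generating sequence $\{g_h\}$, and the uniform $L^\infty$ bound supplies the compact set $K'=[-K,K]$), and you justify replacing the supremum over $\xi\in S^{N-1}$ by one over a countable dense subset either via lower semicontinuity of $\xi\mapsto f(i\cdot\xi,j\cdot\xi,p\cdot\xi)$ or via the continuity in $\xi$ of each generator term $(g_h(i\cdot\xi)-g_h(j\cdot\xi))(p\cdot\xi)$ --- both arguments are sound. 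The paper instead proves only the continuous case by the slicing argument (where it can lean on the machinery of Theorem \ref{mainthm} directly, joint convexity implying $BV$-ellipticity), and reaches the general case by approximating $f$ from below with the continuous, finite, jointly convex truncations $f_k(a_1,a_2,b)=\sup_{h\leq k}\{(g_h(a_1)-g_h(a_2))\cdot b\}$, so that $\Theta_k\nearrow\Theta$ and the conclusion follows from the monotone convergence theorem combined with the lower semicontinuity of each $\int_{J_u}\Theta_k\,d{\cal H}^{N-1}$ along sequences satisfying (\ref{1.3modrem2.3BCDM}). The paper's route confines all the delicacy of infinite values and discontinuity to a soft monotone-limit step and never needs the slice lemma for discontinuous integrands; yours is more direct and avoids the extra approximation layer, at the price of exactly the bookkeeping about $+\infty$ and the countable supremum that you correctly identify and dispose of.
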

\begin{proof}[Proof]
First we assume $f$ continuous.  Under this extra assumption, the proof develops as in Theorem \ref{mainthm} making use of Theorem \ref{lscbvelliptA2} in place of Theorem \ref{lscjcAFP}, when stating and proving the analogue of Lemma \ref{lscalongdir}. 

Then for general  jointly convex $f$, 
it is enough to observe that by Definition \ref{jointconvexity}, $f$ can be approximated by a non decreasing sequence of continuous jointly convex functions, namely $f_k(a_1,a_2,b)= \sup_{h \leq k}\{(g_h(a_1)-g_h(a_2))\cdot b\}$.

Furthermore, for every $k \in \mathbb N$, let 	$\Theta_k : \mathbb R^N \times \mathbb R^N \times  S^{N-1} \to  [0;+\infty]$ be the functional defined by
	\begin{equation}\label{3.14GZ2}
	\Theta_k(i,j, p) := \sup_{\xi \in S^{N-1}} f_k(i \cdot \xi, j\cdot \xi, p \cdot \xi)
	\end{equation}
Clearly,
\begin{equation}\label{3.15GZ2}
	\Theta(i,j,p) =\sup_{k \in \mathbb N}
	\Theta_k(i,j, p).
	\end{equation}
Since this supremum is actually a monotone limit, monotone convergence theorem gives
$$
\displaystyle{\int_{J_u} \Theta(u^+,u^-, \nu_u)  d{\cal H}^{N-1} = \lim_{k \to +\infty} \int_{J_u}
	\Theta_k(u^+, u^-, \nu_u)d{\cal H}^{N-1}}
	$$
On the other hand, the first part of the proof ensures that each functional
$\int_{J_u} \Theta_k(u^+, u^-; \nu_u)d{\cal H}^{N-1} $ is sequentially lower semicontinuous with respect to the $L^1$- strong convergence along all the sequences $\{u_n\}\in SBD(\Omega)$ satisfying the bound (\ref{1.3modrem2.3BCDM}), so that
$$
\displaystyle{\int_{J_u}\Theta(u^+,u^-,\nu_u)d{\cal H}^{N-1} \leq \liminf_{k \to +\infty}
\int_{J_{u_h}}\Theta(u_h^+. u_h^-, \nu_{u_h}) d {\cal H}^{N-1}}   
$$
which concludes the proof.
\end{proof}

\begin{Remark}\label{osssuProposition}
We emphasize that Theorem \ref{mainthm} and Proposition \ref{risultatoconjoint} still hold with obvious adaptations if one replaces the integrand $\Theta$ in (\ref{Thetadef}) (or (\ref{Thetadef2}) respectively) by
$$
\Theta(i,j,p):= \sup_{\xi \in S^{N-1}} f_\xi(i\cdot \xi, j \cdot \xi, p \cdot \xi )
$$
with $f_\xi$ as in (\ref{Thetadef}) (or (\ref{Thetadef2}) respectively) continuously depending on $\xi \in S^{N-1}$.

It is worthwhile to observe that, looking at the proof of Lemma \ref{lscalongdir}, Proposition \ref{risultatoconjoint} provides lower semicontinuity along sequences $\{u_h\}$ satisfying (\ref{1.3modrem2.3BCDM}) also for energy densities $\Theta$ obtained via (\ref{Thetadef2}) by functions $f$ jointly convex just on sets of the type $K'\times K'\times \mathbb R$, insofar as $K'$ is such that $u_h(x) \in K'$ for a.e. $x$ and all $h$. 
\end{Remark}

In the sequel, taking also into account the models proposed in \cite[Example 5.23]{AFP}, we first state the properties inherited by the function $\Theta$ in Theorem \ref{mainthm} and then we provide some examples, essentially in the case where $f$ is jointly convex.

We observe that Definition \ref{Theta} easily entails that $\Theta$ has the following properties, see also \cite[Proposition 4.2]{GZ2}:
\begin{itemize}
\item[(i)]
$\Theta(O i, O j,O p)= \Theta(i,j, p)$

\noindent
for every  orthogonal matrix $O \in \mathbb R^{N \times N} $, $i,j \in \mathbb R^N$ and $p \in S^{N-1}$.
\item[(ii)] $\Theta(i,j,\cdot)$ is an even function, if $f$ is even in the last variable.
Moreover $\Theta(i,j,p)= \Theta(j,i, -p).$
\item[(iii)] If $f$ is continuous, then $\Theta$ is continuous on $\mathbb R^N \times \mathbb R
^N \times S^{N-1}$.
\item[(iv)] $\Theta(i,j, p)$ is subbaditive, in the sense that
$\Theta(i,j,p)\leq \Theta(i,k,p)+ \Theta(k,j,p)$, for every $p \in \mathbb S^{N-1}$.
\item[(v)] If $f$ is bounded, $\Theta$ is also bounded on $\mathbb R^N \times \mathbb R^N \times S^{N-1}$.
\end{itemize}

\begin{Examples}\label{EXAIMETA}

Let $a_1, a_2, b \in \mathbb R$, possible choices of a jointly convex function $f$ in Definition \ref{Theta} are the following, see \cite[Example 5.23]{AFP}: 
\begin{itemize}
\item[1] $f(a_1,a_2,b)= \left\{
\begin{array}{ll}
(g(a_1)+ g(a_2)) |b| & \hbox{ if } a_1 \not= a_2,\\
0 &\hbox{ if }a_1=a_2
\end{array}\right.$ 
and $g:\mathbb R \to [0, +\infty[$ is continuous
\item[2] $f(a_1, a_2, b)= \psi(|a_1-a_2|)\theta(b)$ with $ \psi$ lower semicontinuous, increasing and subadditive, and  $\theta$ even, positively 1-homogeneous and convex. It is worthwhile also to mention that for this class of functions  joint convexity and $BV$-ellipticity are equivalent  as proven in \cite{A2} (see Proposition 5.1 and subsequent observations therein).

Observe that $\theta(\cdot)= |\cdot|$ recovers the result, obtained with different techniques in \cite[Theorem 1.2]{GZ2}. See also Proposition \ref{risultatoconjoint} herein.

\item[3] $f(a_1, a_2, b)= \delta(a_1,a_2)\theta(b)$, with $\delta:\mathbb R\times \mathbb R \to [0, +\infty[$, continuous, positive, symmetric and satisfying the triangle inequality, (for instance $\delta(a_1, a_2)= |g(a_1)-g(a_2)|$, with $g: \mathbb R \to [0, +\infty[$ continuous, and $\theta(\cdot)=|\cdot|$.
\end{itemize}
\end{Examples}

As an application of Theorem \ref{mainthm} and Proposition \ref{risultatoconjoint} some existence results may be proven.

We emphasize that they strongly rely on some recent lower semicontinuity results for bulk energies in $SBD$ due to Ebobisse \cite{E} and to Lu and Yang (see \cite[Theorem 2.8 and Theorem 4.1]{LY}). 

\begin{Theorem}\label{minpb1}
Let $s>1$ and let $V:\Omega \times M^{N\times N}_{\rm sym} \to [0, +\infty)$ be a Carath\'eodory function satisfying
\begin{itemize}
\item for a.e. $x \in \O$, for every $\xi \in M^{N \times N}_{\rm sym}$,
\begin{equation}\label{16LY}
\frac{1}{C}|\xi|^s \leq V(x,\xi)\leq \rho(x)+ C(1+ |\xi|^s)
\end{equation}
for some constant $C>0$ and a function $\rho \in L^1(\O)$,
\item for a.e. $x_0 \in \O, V(x_0,\cdot)$ is symmetric quasiconvex, i.e.,
$$
V(x_0,\xi)\leq \frac{1}{|A|}\int_A V(x_0, \xi+ {\cal E} \varphi(x))dx
$$ 
for every bounded open subset $A$ of $\mathbb R^N$, for every $\varphi \in W^{1,\infty}_0(A;\mathbb R^N)$ and $\xi \in M^{N  \times N}_{\rm sym}$.  
\end{itemize}
Let $h \in L^1(\Omega;\mathbb R^N)$ and let $\{H(x)\}_{x \in \O}$ be a uniformly bounded family of closed subsets of $\mathbb R^N$.  Let $\Theta: \mathbb R^N \times \mathbb R^N \times \mathbb S^{N-1} \to [0, +\infty[$, be a continuous function as in (\ref{Thetadef}). Then the constrained minimum problem 
\begin{equation}\label{minpb32BCDM}
\displaystyle{\min_{\begin{array}{lll}
u \in SBD(\O),
[u]\cdot \nu_u \geq 0 \hbox{ a.e. in }J_u,\\
u(x) \in H(x) \hbox{ a.e. in }\O   
\end{array}}
\left\{\int_\O V(x, {\cal E}u)dx + {\cal H}^{N-1}(J_u) +\int_{J_u} \Theta(u^+,u^-, \nu_u)d {\cal H}^{N-1}+ \int_{\O}h \cdot u dx\right\}}
\end{equation}
admits a solution.
\end{Theorem}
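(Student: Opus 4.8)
The plan is to run the direct method of the calculus of variations. I write ${\cal F}(u)$ for the functional in braces in (\ref{minpb32BCDM}) and let ${\cal U}$ denote the class of admissible competitors, i.e.\ those $u\in SBD(\Omega)$ with $[u]\cdot\nu_u\geq 0$ ${\cal H}^{N-1}$-a.e.\ on $J_u$ and $u(x)\in H(x)$ for a.e.\ $x\in\Omega$. We may assume $m:=\inf_{\cal U}{\cal F}<+\infty$, the statement being void otherwise; since $H$ is uniformly bounded, say $\bigcup_{x\in\Omega}H(x)\subset B_R$, every $u\in{\cal U}$ satisfies $\|u\|_{L^\infty(\Omega;\mathbb R^N)}\leq R$, so that $\int_\Omega h\cdot u\,dx\geq -R\|h\|_{L^1(\Omega)}$ and hence $m>-\infty$. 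I then fix a minimizing sequence $\{u_h\}\subset{\cal U}$, ${\cal F}(u_h)\to m$.

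Next I would extract compactness. From the coercivity in (\ref{16LY}), the nonnegativity of the three nonlinear terms of ${\cal F}$, and the lower bound on the linear term, one gets $\frac{1}{C}\int_\Omega|{\cal E}u_h|^s\,dx\leq\int_\Omega V(x,{\cal E}u_h)\,dx\leq m+1+R\|h\|_{L^1(\Omega)}$ for $h$ large, so that $\{{\cal E}u_h\}$ is bounded in $L^s(\Omega;M^{N\times N}_{\rm sym})$ and, since $s>1$, equi-integrable; moreover $\|u_h\|_{L^\infty}\leq R$ and ${\cal H}^{N-1}(J_{u_h})$ is bounded by the same quantity. Thus $\{u_h\}$ obeys the bound (\ref{1.3modrem2.3BCDM}) with $\gamma(t)=t^s$, which is non-decreasing and verifies (\ref{1.2BCDM}), and Theorem \ref{compactnessBCDM} produces a subsequence (not relabelled) and $u\in SBD(\Omega)$ realizing the convergences (\ref{1.4BCDM})--(\ref{1.7BCDM}); the uniform $L^\infty$ bound and dominated convergence on the bounded set $\Omega$ upgrade $L^1_{loc}$ to $L^1(\Omega;\mathbb R^N)$, and a further subsequence converges a.e.

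Then I check that $u\in{\cal U}$ and pass to the lower limit. Admissibility: closedness of each $H(x)$ together with a.e.\ convergence gives $u(x)\in H(x)$ a.e., while the noninterpenetration inequality $[u]\cdot\nu_u\geq 0$ ${\cal H}^{N-1}$-a.e.\ on $J_u$, that is (\ref{interpenetration}), is granted by Theorem \ref{mainthm} (equivalently Theorem \ref{mainthmv}), whose hypotheses we have just verified. For lower semicontinuity I handle the four terms separately: the area term by (\ref{1.7BCDM}); the surface energy $\int_{J_u}\Theta(u^+,u^-,\nu_u)\,d{\cal H}^{N-1}$ by Theorem \ref{mainthm}, applicable since $\Theta$ has the form (\ref{Thetadef}) with $f$ continuous $BV$-elliptic and $\{u_h\}$ satisfies (\ref{1.3modrem2.3BCDM}) and the constraint; the linear term converges, $\int_\Omega h\cdot u_h\,dx\to\int_\Omega h\cdot u\,dx$, because $u_h\to u$ in $L^1$ with $\|u_h\|_{L^\infty}\leq R$; and the bulk term satisfies $\int_\Omega V(x,{\cal E}u)\,dx\leq\liminf_h\int_\Omega V(x,{\cal E}u_h)\,dx$ by the lower semicontinuity results for symmetric quasiconvex integrands in $SBD$ of Ebobisse \cite{E} and Lu--Yang \cite[Theorem 2.8 and Theorem 4.1]{LY}, whose hypotheses---growth (\ref{16LY}), boundedness and equi-integrability of $\{{\cal E}u_h\}$, bounded jump sets---have been arranged above. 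Since the first three terms are nonnegative, superadditivity of the liminf yields ${\cal F}(u)\leq\liminf_h{\cal F}(u_h)=m$, and as $u\in{\cal U}$ this forces ${\cal F}(u)=m$, i.e.\ $u$ solves (\ref{minpb32BCDM}).

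The only non-routine step is the lower semicontinuity of the bulk part: the convergence ${\cal E}u_h\rightharpoonup{\cal E}u$ available here is merely weak in $L^1$, too weak for an elementary argument, and it is precisely here that the recent $SBD$ results \cite{E,LY} are indispensable---equi-integrability, obtained for free from $s>1$, being exactly what makes them applicable. Everything else (the a priori estimates, the compactness of Theorem \ref{compactnessBCDM}, the closedness of the constraint, and the surface lower semicontinuity from Theorem \ref{mainthm}) is standard.
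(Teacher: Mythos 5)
Your proposal is correct and follows essentially the same route as the paper, whose own proof is only a two-line sketch (direct method plus Theorem \ref{compactnessBCDM}, Theorem \ref{mainthm} and \cite[Theorem 2.8]{LY}); you have simply filled in the details that sketch leaves implicit: the a priori bounds giving (\ref{1.3modrem2.3BCDM}) with $\gamma(t)=t^s$, the upgrade from $L^1_{loc}$ to $L^1$ convergence, the closedness of the constraints, and the term-by-term lower semicontinuity. No gaps.
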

\begin{proof}[Proof]
The hypotheses on $\{H(x)\}_{x \in \O}$ guarantee that every minimizing sequence $\{u_h\}$ is bounded in $L^\infty$. Therefore, by using the Direct Methods of the Calculus of Variations and, by virtue of Theorem \ref{compactnessBCDM} above and Theorem \ref{mainthm}, \cite[Theorem 2.8]{LY} we get a solution.
 \end{proof}

 By the same token invoking \cite[Theorem 4.1]{LY} the following result can be proven
 \begin{Theorem}\label{minpb32BCDM2}
 Let $V:\Omega \times \mathbb R^N \times M^{N  \times N}_{\rm sym} \to [0, +\infty]$. Assume that for every $x \in \Omega$ and for every $u \in \mathbb R^N$ and for every $A \in M^{N \times N}_{\rm sym}$:
 \begin{itemize}
 \item $V(x,u,\cdot)$ is convex and lower semicontinuous on $M^{N \times N}_{sym}$;
 \item $V(\cdot,u,A)$ is measurable in $\O$;
 \item for a.e. $x \in \O$ and for all $u \in \mathbb R^N$ and $\eta>0$ there exists $\delta >0$ such that
 $$
 V(x,u,\xi)-V(x,v,\xi) \leq \eta (1+ V(x,v,\xi))
 $$
 for all $v \in \mathbb R^N$ with $|u-v|\leq \delta$ and for all $\xi\in M^{N \times N}_{\rm sym}$;
 \item there exist $\gamma>0, s>1 $ such that 
 $$
 V(x,u,\xi) \geq \gamma |\xi|^s, \hbox{ for every }x \in \O \hbox{ and for every } (u,\xi) \in \mathbb R^N \times M^{N \times N}_{\rm sym}.
 $$
 \end{itemize}
Let $h \in L^1(\Omega;\mathbb R^N)$ and let $\{H(x)\}_{x \in \O}$ be a uniformly bounded family of closed subsets of $\mathbb R^N$.  Let $\Theta: \mathbb R^N \times \mathbb R^N \times \mathbb S^{N-1} \to [0, +\infty[$, be a continuous function as in (\ref{Thetadef}). Then the constrained minimum problem 
\begin{equation}\label{minpb32BCDMbis}
\displaystyle{\min_{\begin{array}{lll}
u \in SBD(\O),
[u]\cdot \nu_u \geq 0 \hbox{ a.e. in }J_u,\\
u(x) \in H(x) \hbox{ a.e. in }\O   
\end{array}}
\left\{\int_\O V(x,u, {\cal E}u)dx + {\cal H}^{N-1}(J_u) +\int_{J_u} \Theta(u^+,u^-, \nu_u)d {\cal H}^{N-1}+ \int_{\O}h \cdot u dx\right\}}
\end{equation}
admits a solution. 
 \end{Theorem}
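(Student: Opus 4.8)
The plan is to run the Direct Method of the Calculus of Variations, exactly as for Theorem \ref{minpb1}, but with \cite[Theorem 4.1]{LY} in place of \cite[Theorem 2.8]{LY} so as to accommodate the explicit dependence of $V$ on $u$. We may assume the admissible class is nonempty and that the infimum $m$ of the energy over it is finite (otherwise the statement is vacuous, or every admissible map is a minimizer). Let $\{u_h\}\subset SBD(\Omega)$ be a minimizing sequence with $u_h(x)\in H(x)$ a.e., $[u_h]\cdot\nu_{u_h}\ge 0$ $\mathcal{H}^{N-1}$-a.e. on $J_{u_h}$, and total energy converging to $m$. Since $\{H(x)\}_{x\in\Omega}$ is uniformly bounded, $\|u_h\|_{L^\infty(\Omega;\mathbb{R}^N)}\le R$ for some $R$ independent of $h$; because $h\in L^1$, the term $\int_\Omega h\cdot u_h\,dx$ is then bounded, and since $\Theta\ge 0$ and $\mathcal{H}^{N-1}(J_{u_h})\ge 0$ this forces $\int_\Omega V(x,u_h,\mathcal{E}u_h)\,dx$ and $\mathcal{H}^{N-1}(J_{u_h})$ to be bounded. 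In particular, the coercivity $V(x,u,\xi)\ge\gamma|\xi|^s$ with $s>1$ gives a uniform bound on $\|\mathcal{E}u_h\|_{L^s(\Omega;M^{N\times N}_{\rm sym})}$, so $\{\mathcal{E}u_h\}$ is equiintegrable.

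Next I would apply the compactness Theorem \ref{compactnessBCDM} with $t\mapsto t^s$ (which is nondecreasing and satisfies (\ref{1.2BCDM})) in the role of the function $\gamma$ there: the bound (\ref{1.3modrem2.3BCDM}) holds uniformly in $h$, so up to a subsequence there is $u\in SBD(\Omega)$ for which (\ref{1.4BCDM})--(\ref{1.7BCDM}) hold. In particular $u_h\to u$ strongly in $L^1_{loc}(\Omega;\mathbb{R}^N)$, hence in $L^1(\Omega;\mathbb{R}^N)$ (using $\|u_h\|_{L^\infty}\le R$, $|\Omega|<\infty$ and dominated convergence), $\mathcal{E}u_h\rightharpoonup\mathcal{E}u$ weakly in $L^1$ by (\ref{1.5BCDM}), and $\mathcal{H}^{N-1}(J_u)\le\liminf_h\mathcal{H}^{N-1}(J_{u_h})$ by (\ref{1.7BCDM}). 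Passing to a further subsequence I may assume $u_h\to u$ a.e.; since each $H(x)$ is closed this yields $u(x)\in H(x)$ for a.e. $x\in\Omega$. The noninterpenetration constraint $[u]\cdot\nu_u\ge 0$ $\mathcal{H}^{N-1}$-a.e. on $J_u$ is precisely (\ref{interpenetration}), i.e. Theorem \ref{mainthmv} applied to $\{u_h\}$; hence $u$ is admissible.

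It then remains to bound the energy of $u$ from above by $m$, which I would do by passing to the $\liminf$ in each of the four terms along a common subsequence realizing $\liminf_h$ of the total energy. The linear term converges by strong $L^1$ convergence; the perimeter term is controlled by (\ref{1.7BCDM}); the surface term $\int_{J_u}\Theta(u^+,u^-,\nu_u)\,d\mathcal{H}^{N-1}\le\liminf_h\int_{J_{u_h}}\Theta(u_h^+,u_h^-,\nu_{u_h})\,d\mathcal{H}^{N-1}$ is exactly Theorem \ref{mainthm}, whose hypotheses are met here ($\Theta$ continuous and of the form (\ref{Thetadef}), $\{u_h\}$ satisfying (\ref{1.3modrem2.3BCDM}) and the sign condition, $u_h\to u$ in $L^1$); finally $\int_\Omega V(x,u,\mathcal{E}u)\,dx\le\liminf_h\int_\Omega V(x,u_h,\mathcal{E}u_h)\,dx$ follows from \cite[Theorem 4.1]{LY}, since the four structural assumptions on $V$ there --- convexity and lower semicontinuity in the last variable, measurability in $x$, the uniform continuity-in-$u$ estimate, and the $s$-coercivity --- are exactly our hypotheses, and we have $u_h\to u$ strongly in $L^1$ together with $\mathcal{E}u_h\rightharpoonup\mathcal{E}u$ weakly in $L^1$. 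Summing the four inequalities shows that the energy of $u$ is at most $m$, so $u$ is a minimizer.

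I expect the only genuinely delicate point to be the bulk lower semicontinuity: weak $L^1$ convergence of the symmetrized gradients together with strong $L^1$ convergence of $u_h$ is not enough for a bare convexity/Ioffe-type argument once $V$ depends explicitly on $u$, and one needs precisely the third hypothesis on $V$ (the $\eta$--$\delta$ comparison of $V(x,u,\xi)$ with $V(x,v,\xi)$, a Scorza--Dragoni/De Giorgi-type condition) to transfer the $u$-slot continuously. This is exactly what \cite[Theorem 4.1]{LY} is built to handle, so the real work is just checking that our assumptions match theirs; everything else is the standard Direct Method together with the already-established Theorem \ref{mainthm} and the compactness Theorem \ref{compactnessBCDM}.
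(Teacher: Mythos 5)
Your proposal is correct and follows exactly the route the paper takes: the paper's own proof is a one-line appeal to the Direct Method combined with Theorem \ref{compactnessBCDM}, Theorem \ref{mainthm} for the surface term, and \cite[Theorem 4.1]{LY} for the bulk term, which is precisely the argument you spell out in detail (including the $L^\infty$ bound from $\{H(x)\}$, the choice $\gamma(t)=t^s$, and the closedness of the constraints).
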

 
Other choices of the forces $h$, appearing in the minimum problems above, are also possible: we refer to \cite{LY}. Analogously the function $\Theta$ can be chosen as in (\ref{Thetadef2}) and it is enough to invoke Proposition \ref{risultatoconjoint}.

\end{document}